\newtheorem{Theorem}{Theorem}[section]
\newtheorem{Lemma}[Theorem]{Lemma}
\newtheorem{Corollary}[Theorem]{Corollary}
\newtheorem{Remark}[Theorem]{Remark}
\newtheorem{Example}[Theorem]{Example}
\newtheorem{Examples}[Theorem]{Examples}
\newtheorem{Definition}[Theorem]{Definition}
\newtheorem{Conjecture}[Theorem]{Conjecture}
\newtheorem{Question}[Theorem]{Question}
\numberwithin{equation}{section}
\def\QQ{{\mathbb Q}} \def\NN{{\mathbb N}} \def\ZZ{{\mathbb Z}}
\def\frk{\mathfrak}  \def\pp{{\frk p}}
 \def\nn{{\frk n}} \def\mm{{\frk m}}
\def\opn#1#2{\def#1{\operatorname{#2}}} 
\opn\chara{char} \opn\length{\ell} \opn\pd{pd} \opn\rk{rk}
\opn\projdim{proj\,dim} \opn\injdim{inj\,dim}
\opn\rank{rank} \opn\depth{depth} \opn\grade{grade} 
\opn\hei{ht} \opn\embdim{emb\,dim}\opn\codim{codim}
\opn\Tr{Tr} \opn\bigrank{big\,rank}
\opn\superheight{superheight} \opn\lcm{lcm}
\opn\rdim{rdim} \opn\trdeg{tr\,deg} \opn\reg{reg}  \opn\lreg{lreg} 
\opn\ini{in} \opn\lpd{lpd} \opn\size{size} \opn{\mult}{mult}
\opn\div{div} \opn\Div{Div} \opn\cl{cl} \opn\Cl{Cl}
\opn\Spec{Spec} \opn\Supp{Supp} \opn\supp{supp} 
\opn\Sing{Sing} \opn\Ass{Ass} \opn\Min{Min}
\opn\Proj{Proj} \opn{\Max}{Max} \opn{\Assh}{Assh}
\opn\Ann{Ann} \opn\Rad{Rad} \opn\Soc{Soc}
\opn\Syz{Syz} \opn\Im{Im} \opn\Ker{Ker} \opn\Coker{Coker}
\opn\Am{Am} \opn\Hom{Hom} \opn\tor{Tor} \opn\Ext{Ext}
\opn\End{End} \opn\Aut{Aut} \opn\id{id}
\opn\nat{nat} \opn\pff{pf} 
\opn\Pf{Pf} \opn\GL{GL} \opn\SL{SL} \opn\mod{mod} \opn\ord{ord}
\opn\Gin{Gin} \opn\Hilb{Hilb}
\opn\adeg{adeg} \opn\std{std}\opn\ip{infpt}
\opn\Pol{Pol} \opn\sat{sat} \opn\Var{Var}
\opn\aff{aff} \opn\con{conv} \opn\relint{relint} \opn\st{st}
\opn\lk{lk} \opn\cn{cn} \opn\core{core} \opn\vol{vol}
\opn\link{link} \opn\star{star}
\opn\gr{gr}
\opn\inn{in}
\title[]{On the Hilbert-Samuel Coefficients of Frobenius Powers \\ of an ideal}
\author[]{Arindam Banerjee}
\address{Indian Institute of Technology Kharagpur, Kharagpur, India}
\email{123.arindam@gmail.com}
\author[]{Kriti Goel}
\address{University of Utah, Utah, USA 84112}
\email{kritigoel.maths@gmail.com}
\author[]{J. K. Verma}
\address{Indian Institute of Technology Bombay, Mumbai, India 400076}
\email{jkv@math.iitb.ac.in}
\thanks{The second author is supported by Fulbright-Nehru Postdoctoral Research Fellowship.}
\thanks{{\it Key words and phrases}: Hilbert-Samuel coefficients, Hilbert-Kunz function, Hilbert-Kunz multiplicity, Frobenius power, face ring of simplicial complex, generalized Hilbert-Kunz function, generalized Hilbert-Kunz multiplicity.}
\thanks{{\it 2010 AMS Mathematics Subject Classification:} 13A30, 13C14, 13C15, 13D40, 13F55.}
\begin{document}

\begin{abstract}
	We provide suitable conditions under which the asymptotic limit of the Hilbert-Samuel coefficients of the Frobenius powers of an $\mm$-primary ideal exists in a Noetherian local ring $(R,\mm)$ with prime characteristic $p>0.$ This, in turn, gives an expression of the Hilbert-Kunz multiplicity of powers of the ideal. We also prove that for a face ring $R$ of a simplicial complex and an ideal $J$ generated by pure powers of the variables, the generalized Hilbert-Kunz function $\ell(R/(J^{[q]})^k)$ is a polynomial for all $q,k$ and also give an expression of the generalized Hilbert-Kunz multiplicity of powers of $J$ in terms of Hilbert-Samuel multiplicity of $J.$ We conclude by giving a counter-example to a conjecture proposed by I. Smirnov which connects the stability of an ideal with the asymptotic limit of the first Hilbert coefficient of the Frobenius power of the ideal.
\end{abstract}

\maketitle

\section{Introduction}

This article is inspired by the work of I. Smirnov in \cite{ilya}, where the author studies the Hilbert-Kunz multiplicity of powers of an ideal. Let $(R,\mm)$ be a $d$-dimensional Noetherian local ring of prime characteristic $p$ and let $I$ be an $\mm$-primary ideal. The $q^{th}$-Frobenius power of $I$ is the ideal $I^{[q]}=(x^q\mid x\in I)$ where $q=p^e$ for $e\in \NN.$ The function $e \mapsto \ell_R(R/I^{[p^e]})$ is called the {\it Hilbert-Kunz function} of $R$ with respect to $I$ and was first considered by E. Kunz in \cite{kunz69}. In \cite{monsky}, P. Monsky showed that this function is of the form 
\[ \ell_R(R/I^{[q]}) = e_{HK}(I,R) q^{d} + O(q^{d-1}), \]
where $e_{HK}(I,R)$ is a positive real number called the {\it Hilbert–Kunz multiplicity} of $R$ with respect to $I.$ We write $e_{HK}(R) := e_{HK}(\mm,R)$ and $e_{HK}(I) := e_{HK}(I,R).$ The Hilbert-Kunz multiplicity of powers of an ideal was first considered by D. Hanes in \cite[Theorem 3.2]{hanesNotes}. He proved that 
\[ \ell\left(\frac{R}{(I^{[q]})^k}\right) = \left( \frac{e_0(I)}{d!}k^d + O(k^{d-1}) \right)q^d, \]
where $e_0(I)$ denotes the Hilbert-Samuel multiplicity of $R$ with respect to $I.$ 
Recall that the {\it Hilbert-Samuel function} $H_I(n)$ of $R$ with respect to $I$ is defined as $H_I(n) = \ell_R(R/I^n).$ It is known that $H_I(n)$ is a polynomial function of $n$ of degree $d$, for large $n.$ In particular, there exists a polynomial $P_I(x) \in \QQ[x],$ called the Hilbert-Samuel polynomial, such that $H_I(n) = P_I(n)$ for all large $n.$ Write 
\[ P_I(x) = e_0(I) \binom{x+d-1}{d} - e_1(I) \binom{x+d-2}{d-1} + \cdots + (-1)^d e_d(I), \]
where $e_i(I)$ for $i = 0, 1, \ldots, d$ are integers, called the {\it Hilbert-Samuel coefficients of $I.$} The leading coefficient $e_0(I)$ is called the multiplicity of $I$ and $e_1(I)$ is called the Chern number of $I.$ 

In \cite{trivedi}, Trivedi proved that if $R$ is a standard graded ring of dimension $d$ over a perfect field of characteristic $p>0,$ and $I$ is a homogeneous ideal of finite colength and generated in the same degree, then
\begin{align*}
	\lim\limits_{k \rightarrow \infty} \frac{e_{HK}(I^k) - e_0(I^k)/d!}{k^{d-1}} = \frac{e_0(I)}{2(d-2)!} - \lim\limits_{q \rightarrow \infty} \frac{e_1(I^{[q]})}{q^d(d-1)!}
\end{align*}
and the last limit exists. The result was proved in full generality by Smirnov in \cite[Proposition 2.5]{ilya}. He proved that if $(R,\mm)$ is a Noetherian local ring and $I$ is an $\mm$-primary ideal, then
\[ e_{HK}(I^k) = e_0(I) \binom{k+d-1}{d} - \lim\limits_{q \rightarrow \infty} \frac{e_1(I^{[q]})}{q^d} \binom{k+d-2}{d-1} + o(k^{d-1}). \]
The above equation motivates one to ask if other Hilbert coefficients may also be appear in the expression.

One of the aims of this article is to provide some suitable conditions under which the following questions, proposed by Smirnov in \cite{ilya}, can be answered in the affirmative. Let $(R,\mm)$ be a Noetherian local ring of dimension $d$ with prime characteristic $p>0.$ Let $I$ be an $\mm$-primary ideal. Put $q = p^e,$ for some $e \in \NN.$

\begin{Question}\label{q1}
	Does the limit $\lim\limits_{q \rightarrow \infty} e_i(I^{[q]})/q^d$ exist for all $i \geq 1$ $?$
\end{Question}

\begin{Question}\label{q2}
	Do we have for large $k,$ that 
	\[  e_{HK}(I^k) = \sum_{i=0}^{d} (-1)^i \binom{k+d-1-i}{d-i} \lim\limits_{q \rightarrow \infty} \frac{e_i(I^{[q]})}{q^d} \ ? \]
\end{Question}

Recall that for an ideal $I$ in a ring $R,$ a reduction of $I$ is an ideal $J \subseteq I$ such that $JI^n = I^{n+1}$, for all large $n.$ A minimal reduction of $I$ is a reduction of $I$ minimal with respect to inclusion. For a minimal reduction $J$ of $I$, we set $r_J(I) = \min \{ n \mid  I^{m+1} = JI^m \text{ for all } m \geq n \}.$ The reduction number $r(I)$ of $I$ is defined as 
\[ r(I) = \min\{ r_J(I) \mid J \text{ is a minimal reduction of } I \}. \]
An ideal with reduction number one is called a stable ideal.

It is easy to check that the questions \ref{q1} and \ref{q2} have affirmative answers when $R$ is a Cohen-Macaulay local ring and $I$ is a parameter ideal, or when $R$ is a Cohen-Macaulay local ring of dimension $d \geq 2$ and $I$ is a stable ideal (see \cite[Proposition 1.7, Theorem 1.8]{watanabeYoshidaDim2}). A class of rings satisfying the latter assumption includes rings with minimal multiplicity, taking $I = \mm.$ Recall that $R$ has minimal multiplicity if $\text{emb}(R) - \dim R +1 = e_0(R),$ where $\text{emb}(R)$ denotes the embedding dimension of $R.$

In section 2, we prove the following results which answer the above questions in the affirmative under suitable assumptions.

\begin{Theorem}
	Let $(R,\mm)$ be a Buchsbaum ring of dimension $d$ with prime characteristic $p$ and let $I$ be an ideal generated by a system of parameters. Then for all $i = 1,\ldots,d,$ $\lim\limits_{q \rightarrow \infty} e_i(I^{[q]})/q^d=0$ and for all $k \geq 1,$
	\begin{align*}
		e_{HK}(I^k) = e_0(I) \binom{k+d-1}{d}.
	\end{align*}
\end{Theorem}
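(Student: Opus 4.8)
The plan is to reduce everything to the known structure of Hilbert--Samuel functions of parameter ideals in Buchsbaum rings. Write $I = (x_1,\ldots,x_d)$ for a system of parameters. Since $R$ has characteristic $p$ the Frobenius is additive, so $(\sum r_ix_i)^q = \sum r_i^q x_i^q$ and hence $I^{[q]} = (x_1^q,\ldots,x_d^q)$; as $\sqrt{(x_1^q,\ldots,x_d^q)} = \sqrt{I}$ is $\mm$-primary, the elements $x_1^q,\ldots,x_d^q$ again form a system of parameters, i.e.\ $I^{[q]}$ is itself a parameter ideal of $R$. The same computation on generators gives $(I^k)^{[q]} = (I^{[q]})^k$, so $(I^{[q]})^k = \qq^k$ for the parameter ideal $\qq = I^{[q]}$, for every $q$ and $k$.

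The key input is the Stückrad--Vogel description of the Hilbert function of a parameter ideal $\qq$ in a Buchsbaum local ring $R$ of dimension $d$: one has $\ell(R/\qq^{n}) = P_\qq(n)$ already for all $n \geq 1$, where
\[ P_\qq(n) = e_0(\qq)\binom{n+d-1}{d} + \sum_{i=1}^{d}(-1)^i e_i(\qq)\binom{n+d-1-i}{d-i}, \]
and, crucially, for each $i \geq 1$ the coefficient $e_i(\qq)$ equals a fixed $\ZZ$-linear combination of the lengths $\ell(H^j_\mm(R))$, $0 \le j \le d-1$ (finite because $R$ is Buchsbaum), which is independent of the chosen parameter ideal $\qq$. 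Applying this with $\qq = I^{[q]}$ shows that $e_i(I^{[q]})$ does not depend on $q$ for $i = 1,\ldots,d$; being constant in $q$, it satisfies $\lim_{q\to\infty} e_i(I^{[q]})/q^d = 0$, which is the first assertion.

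For the Hilbert--Kunz statement, recall $e_{HK}(I^k) = \lim_{q\to\infty} \ell\big(R/(I^k)^{[q]}\big)/q^d = \lim_{q\to\infty} \ell\big(R/(I^{[q]})^k\big)/q^d$. Since $k \geq 1$, the displayed formula applied to $\qq = I^{[q]}$ and evaluated at $n = k$ gives
\[ \ell\big(R/(I^{[q]})^k\big) = e_0(I^{[q]})\binom{k+d-1}{d} + \sum_{i=1}^{d}(-1)^i e_i(I^{[q]})\binom{k+d-1-i}{d-i}. \]
By multiplicativity of multiplicity for parameter ideals, $e_0(I^{[q]}) = e_0\big((x_1^q,\ldots,x_d^q)\big) = q^d\, e_0(I)$, while the remaining summands form a constant independent of $q$ (fixed $k$, and the $e_i(I^{[q]})$ are constant by the previous paragraph). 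Dividing by $q^d$ and letting $q \to \infty$ kills the lower-order part and leaves $e_{HK}(I^k) = e_0(I)\binom{k+d-1}{d}$.

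The computation is short; essentially all the content is in the setup, so the only real obstacle is locating and correctly invoking the precise Buchsbaum-ring result --- both that the Hilbert function of a parameter ideal is polynomial already for $n \geq 1$ (this is what lets the conclusion hold for all $k\geq 1$, not just $k \gg 0$) and, more importantly, that $e_1(\qq),\ldots,e_d(\qq)$ are parameter-ideal-independent invariants of $R$. If one prefers a self-contained treatment, this independence can be read off from the fact that in a Buchsbaum ring every parameter ideal is standard, so that $\ell(R/\qq^{n+1})$ is computed from the local cohomology of $R$ via Schenzel's formula; one should then also check the edge behaviour at $n=1$ directly.
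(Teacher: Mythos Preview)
Your proof is correct and follows essentially the same route as the paper: both arguments hinge on the fact that in a Buchsbaum local ring the higher Hilbert coefficients $e_i(\qq)$ of a parameter ideal $\qq$ are expressed as fixed combinations of the lengths $\ell(H^j_\mm(R))$ (the paper cites Trung, you cite St\"uckrad--Vogel/Schenzel, which amounts to the same thing), so $e_i(I^{[q]})$ is constant in $q$, and both use that the Hilbert function of $\qq$ agrees with its polynomial for all $k \geq 1$. Your write-up is slightly more explicit about why $I^{[q]}$ is again a parameter ideal, why $(I^k)^{[q]} = (I^{[q]})^k$, and why $e_0(I^{[q]}) = q^d e_0(I)$, but these are exactly the details the paper's proof leaves implicit.
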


\begin{Theorem}
	Let $(R,\mm)$ be a Cohen-Macaulay local ring of dimension $d\geq 1$  and prime characteristic $p.$ Let $I$ be an $\mm$-primary ideal. 
	Suppose that $r=r(I)$ and  $\depth G(I^{[q]})\geq d-1$  for all large $q.$ Then
	\begin{enumerate}[{\rm(1)}]
		\item For all $i=1,2,\ldots, d$ the limit $L_i(I):=\lim_{q\to \infty} e_i(I^{[q]})/q^d$ exists and
		\[L_i(I)=\lim_{q\to \infty} \frac{e_i(I^{[q]})}{q^d}=\sum_{n=i}^r\binom{n-1}{i-1} \left[e_0(I)-\sum_{j=0}^d(-1)^j\binom{d}{j}e_{HK}(I^{n-j})\right]
		.\]
		\item For all $n \geq r-d+1,$
		\[\ell\left(\frac{R}{(I^{[q]})^n}\right)=\sum_{i=0}^d (-1)^ie_i(I^{[q]})\binom{n+d-1-i}{d-i} \]
		and hence for all $n\geq r-d+1,$ we have
		\[ e_{HK}(I^n)=\sum_{i=0}^d(-1)^iL_i(I)\binom{n+d-1-i}{d-i}.\] 
	\end{enumerate}
\end{Theorem}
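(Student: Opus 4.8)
The plan is to exploit the hypothesis $\depth G(I^{[q]}) \geq d-1$, which is exactly the condition under which the Hilbert-Samuel function of $I^{[q]}$ agrees with its Hilbert polynomial in a controlled range. Recall the standard fact (a consequence of the Grothendieck-Serre formula and results of Marley, or of the theory relating depth of the associated graded ring to the postulation number) that if $R$ is Cohen-Macaulay of dimension $d$ and $\depth G(\aa) \geq d-1$, then $\ell(R/\aa^n) = P_{\aa}(n)$ for all $n \geq r_J(\aa) - d + 1$, where $J$ is any minimal reduction. First I would fix $q$ and apply this to $\aa = I^{[q]}$. The key point is that the reduction number $r_J(I^{[q]})$ is bounded above by $r_J(I) = r$: if $J$ is a minimal reduction of $I$ with $JI^m = I^{m+1}$ for $m \geq r$, then taking Frobenius powers gives $J^{[q]}(I^{[q]})^m = (I^{[q]})^{m+1}$ for $m \geq r$ (since the Frobenius is a ring homomorphism and commutes with products, and $(I^{m+1})^{[q]} = (I^{[q]})^{m+1}$, $(JI^m)^{[q]} = J^{[q]}(I^{[q]})^m$ because these ideals are generated by $q$-th powers of the products of generators), and $J^{[q]}$ is generated by $d$ elements hence is a minimal reduction of $I^{[q]}$. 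Therefore $r_J(I^{[q]}) \leq r$ uniformly in $q$, and so
\[
\ell\!\left(\frac{R}{(I^{[q]})^n}\right) = P_{I^{[q]}}(n) = \sum_{i=0}^d (-1)^i e_i(I^{[q]}) \binom{n+d-1-i}{d-i} \qquad \text{for all } n \geq r-d+1,
\]
which is part (2) for a fixed $q$; crucially the range $n \geq r-d+1$ does not depend on $q$.

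Next I would establish part (1). The idea is to evaluate the polynomial identity above at $d+1$ consecutive integer values of $n$ (say $n = r-d+1, \ldots, r+1$, or more conveniently at values large enough that everything is polynomial, and then invert). Since $\ell(R/(I^{[q]})^n)/q^d = \ell(R/(I^{[q]})^n)/q^d$ converges as $q \to \infty$ to $e_{HK}(I^n)$ by definition of Hilbert-Kunz multiplicity of the ideal $I^n$ (note $(I^{[q]})^n = (I^n)^{[q]}$), the left-hand side of the displayed identity, divided by $q^d$, converges to $e_{HK}(I^n)$ for each fixed $n$ in the stable range. The right-hand side is a fixed $\QQ$-linear combination (with coefficients the binomial numbers, independent of $q$) of $e_0(I^{[q]})/q^d, \ldots, e_d(I^{[q]})/q^d$. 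Since $e_0(I^{[q]}) = q^d e_0(I)$ exactly (the multiplicity of a Frobenius power scales by $q^d$, as $I^{[q]}$ and $I^{q\cdot}$ have the same integral closure up to the relevant asymptotics — more precisely $e_0(I^{[q]}) = \ell(R/J^{[q]}) = q^d \ell(R/J) = q^d e_0(I)$ when $J$ is a parameter minimal reduction), we know the $i=0$ term converges. Writing the Vandermonde-type linear system obtained by evaluating at $d$ or $d+1$ consecutive values of $n$, the matrix of binomial coefficients is invertible over $\QQ$, so convergence of all the left-hand sides (the $e_{HK}(I^n)$) forces convergence of each $e_i(I^{[q]})/q^d$. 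Solving the system explicitly and matching against the known formula for $e_{HK}(I^n)$ in the Cohen-Macaulay case (where $e_{HK}(I^n)$ itself has a combinatorial expression in terms of the lengths $\ell(R/I^m)$ via a Vandermonde inversion, since $e_{HK}$ of a parameter ideal equals its colength and the reduction $J^{[q]}$ is a parameter ideal) yields the closed form
\[
L_i(I) = \sum_{n=i}^r \binom{n-1}{i-1}\left[ e_0(I) - \sum_{j=0}^d (-1)^j \binom{d}{j} e_{HK}(I^{n-j}) \right].
\]

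Finally, part (2)'s second assertion follows by passing to the limit $q \to \infty$ in the first assertion: divide by $q^d$, use that $\ell(R/(I^{[q]})^n)/q^d \to e_{HK}(I^n)$ and $e_i(I^{[q]})/q^d \to L_i(I)$ from part (1), and note the binomial coefficients are constants.

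The main obstacle I anticipate is pinning down the precise postulation-number statement — that $\depth G(\aa) \geq d-1$ together with $r_J(\aa) \leq r$ forces $H_{\aa}(n) = P_{\aa}(n)$ for all $n \geq r - d + 1$ — with a bound on the postulation number that is uniform in $q$ and depends only on $r$ and $d$, not on finer invariants of $I^{[q]}$. This requires invoking the right version of the theorem of Marley (or the Kirby-Mehran / Huckaba-Marley circle of results) relating $\depth G$, the reduction number, and the postulation number; one must check the inequality $\mathrm{postulation}\,n(\aa) \leq r_J(\aa) - d + 1 - \mathrm{something}$ holds in the form needed and that substituting $r_J(I^{[q]}) \leq r$ gives the claimed uniform range. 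A secondary subtlety is justifying $e_0(I^{[q]}) = q^d e_0(I)$ cleanly (via a parameter minimal reduction $J$ of $I$, using that $J^{[q]}$ is a minimal reduction of $I^{[q]}$ generated by a system of parameters, so its multiplicity equals its colength $\ell(R/J^{[q]}) = q^d \ell(R/J) = q^d e_0(J) = q^d e_0(I)$), and confirming the length identity $(I^{[q]})^n = (I^n)^{[q]}$ used to identify the limit with $e_{HK}(I^n)$; both are routine but should be stated. Everything else is linear algebra over $\QQ$ with a Vandermonde-type matrix of binomial coefficients, which is invertible, so the existence of the limits $L_i(I)$ is automatic once the polynomiality in the fixed range and the convergence of the $e_{HK}(I^n)$ are in hand.
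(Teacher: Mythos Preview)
Your setup for part (2) --- bounding $r(I^{[q]}) \leq r$ via $J^{[q]}(I^{[q]})^r = (I^{[q]})^{r+1}$, then invoking the postulation bound (Marley) under $\depth G(I^{[q]}) \geq d-1$ to conclude $H_{I^{[q]}}(n) = P_{I^{[q]}}(n)$ for all $n \geq r-d+1$, and finally dividing by $q^d$ and letting $q \to \infty$ --- is exactly the paper's argument.

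For part (1) you diverge from the paper. The paper does \emph{not} run a Vandermonde inversion; instead it applies the Huckaba--Marley formula (the paper's Theorem \ref{hm}(3)) directly to $I^{[q]}$:
\[
e_i(I^{[q]}) = \sum_{n=i}^{r} \binom{n-1}{i-1}\, \ell\!\left(\frac{(I^{[q]})^n}{J^{[q]}(I^{[q]})^{n-1}}\right),
\]
and then uses Theorem \ref{hm}(1),(2) to rewrite $\ell((I^{[q]})^n / J^{[q]}(I^{[q]})^{n-1}) = q^d e_0(I) - \Delta^d H_{I^{[q]}}(n)$. Dividing by $q^d$ and taking $q \to \infty$ termwise gives both the existence of $L_i(I)$ and the stated closed form in one stroke, since $\Delta^d H_{I^{[q]}}(n)/q^d \to \sum_{j=0}^d (-1)^j\binom{d}{j} e_{HK}(I^{n-j})$.

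Your Vandermonde argument is a legitimate alternative for the \emph{existence} of $L_i(I)$: evaluating the polynomial identity at $d+1$ consecutive $n$ in the stable range and inverting does force each $e_i(I^{[q]})/q^d$ to converge. But your derivation of the \emph{closed form} is where there is a genuine gap. The sentence ``solving the system explicitly and matching against the known formula for $e_{HK}(I^n)$ \ldots\ yields the closed form'' does not go through: the inverse of the Vandermonde-type binomial matrix expresses $L_i(I)$ as a linear combination of the values $e_{HK}(I^n)$ for $n$ in your chosen window, not as the specific sum $\sum_{n=i}^r \binom{n-1}{i-1}[\,e_0(I) - \sum_j (-1)^j\binom{d}{j} e_{HK}(I^{n-j})\,]$ claimed in the theorem. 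That particular shape is precisely the imprint of the Huckaba--Marley identity $e_i = \sum_n \binom{n-1}{i-1}\,\ell(I^n/JI^{n-1})$, and your Vandermonde route does not recover it without essentially re-proving that identity. Your aside that ``$e_{HK}(I^n)$ itself has a combinatorial expression in terms of the lengths $\ell(R/I^m)$'' is also not correct in the generality needed and does not help here. To complete part (1) you should invoke Huckaba--Marley (Theorem \ref{hm}) directly, as the paper does.
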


The generalized Hilbert-Kunz function was introduced by Aldo Conca in \cite{conca}. Let $(R,\mm)$ be a $d$-dimensional Noetherian local  (resp. standard graded)  ring with maximal (resp. maximal homogeneous) ideal $\mm$  and $I$ be an $\mm$-primary (resp. a graded $\mm$-primary) ideal. Fix a set of generators of $I$, say $I=(a_1, a_2,\ldots, a_g).$ We choose these as homogeneous elements in case $R$ is a graded ring. Define the $s^{th}$ Frobenius power of $I$ to be the ideal $I^{[s]}=(a_1^s, a_2^s, \ldots, a_g^s).$ The generalized Hilbert-Kunz function of $I$ is defined as $HK_I(s)=\ell(R/I^{[s]}).$ The generalized Hilbert-Kunz multiplicity is defined as $\lim\limits_{s \rightarrow \infty} HK_I(s)/s^d$, whenever the limit exists and is denoted by $e_{HK}(I)$.

In sections $3$ and $4,$ we compute the generalized Hilbert-Kunz function of powers of certain monomial ideals in face rings of simplicial complexes. In particular, these results are characteristic independent. 

\begin{Theorem}  \label{HKSC}
	Let $K$ be a field and $R = K[x_1,\ldots,x_r]/I_{\Delta}$ be a $d$-dimensional face ring of a simplicial complex $\Delta.$ Let $J = (x_1^{v_1},\ldots,x_r^{v_r})R,$ where $v_i >0$ for all $i.$ Then $\lim\limits_{q \rightarrow \infty} e_i(J^{[q]})/q^d =0$ for all $i=1,\ldots,d$ and for all $k \geq 1,$ the generalized Hilbert-Kunz multiplicity 
	\[ e_{HK}(J^k) = e_0(J) \binom{k+d-1}{d}. \]
\end{Theorem}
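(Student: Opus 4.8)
The plan is to reduce everything to a direct count of standard monomials in $R=K[x_1,\dots,x_r]/I_\Delta$, which is possible because $J^{[q]}=(x_1^{v_1q},\dots,x_r^{v_rq})R$ is a monomial ideal. The standard monomials of $R$ are exactly the $x^{\bb}$ with $\supp(\bb)\in\Delta$, and a nonzero monomial $x^{\bb}$ lies in $(J^{[q]})^n$ if and only if $\sum_i\lfloor b_i/(v_iq)\rfloor\ge n$ (membership in a monomial ideal of a Stanley--Reisner ring is detected by divisibility by a monomial generator, the Stanley--Reisner relations only forcing $x^{\bb}=0$ when $\supp(\bb)\notin\Delta$). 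Writing $A_F(n,q)$ for the number of standard monomials of $R/(J^{[q]})^n$ whose support is exactly the face $F$, this gives
\[\ell\!\left(\frac{R}{(J^{[q]})^n}\right)=\sum_{F\in\Delta}A_F(n,q).\]

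Next, for a face $G$ with $|G|=j$ I would put $S_G=K[x_i: i\in G]$ and $J_G=(x_i^{v_i}: i\in G)S_G$, and observe that the monomials with support contained in $G$ that avoid $(J^{[q]})^n$ are precisely the standard monomials of $S_G/(J_G^{[q]})^n$. Since $S_G$ is Cohen--Macaulay of dimension $j$ and $J_G^{[q]}=(x_i^{v_iq}: i\in G)$ is generated by a regular sequence, the classical formula for powers of a parameter ideal in a Cohen--Macaulay ring yields
\[\ell\!\left(\frac{S_G}{(J_G^{[q]})^n}\right)=\ell\!\left(\frac{S_G}{J_G^{[q]}}\right)\binom{n+j-1}{j}=q^{j}\Big(\prod_{i\in G}v_i\Big)\binom{n+j-1}{j}\]
for every $n\ge 1$. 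Because every subset of a face of $\Delta$ is again a face of $\Delta$, M\"obius inversion over the Boolean lattice of subsets rewrites each $A_F$ in terms of these lengths, and collecting the contributions of all faces gives the closed form
\[\ell\!\left(\frac{R}{(J^{[q]})^n}\right)=\sum_{j=0}^{d}q^{j}\Big(\sum_{\substack{G\in\Delta\\ |G|=j}}\mu_G\prod_{i\in G}v_i\Big)\binom{n+j-1}{j},\qquad \mu_G:=\sum_{F\supseteq G,\ F\in\Delta}(-1)^{|F|-|G|}.\]
This is a genuine polynomial in $n$ of degree $d$ for each fixed $q$ (and a polynomial in $q$ for each fixed $n$), so it coincides with the Hilbert--Samuel polynomial $P_{J^{[q]}}(n)$ for all $n\ge 1$; in particular this establishes the polynomiality claim of the abstract.

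It remains to read off the coefficients. Matching the closed form, expanded in the basis $\binom{n+d-1}{d},\binom{n+d-2}{d-1},\dots$, against $P_{J^{[q]}}(n)=\sum_{i=0}^d(-1)^ie_i(J^{[q]})\binom{n+d-1-i}{d-i}$ identifies
\[e_i(J^{[q]})=(-1)^i\,q^{d-i}\sum_{G\in\Delta,\ |G|=d-i}\mu_G\prod_{j\in G}v_j,\qquad 0\le i\le d.\]
For $i=0$ only the $d$-element faces survive; these are top-dimensional facets, hence maximal, so $\mu_G=1$ and $e_0(J^{[q]})=q^d\sum_{\dim F=d-1}\prod_{i\in F}v_i=q^d\,e_0(J)$ (the standard expression for the multiplicity of a Stanley--Reisner ring at $q=1$). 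For $i\ge 1$ the exponent of $q$ is $d-i<d$, so $\lim_{q\to\infty}e_i(J^{[q]})/q^d=0$. Finally, $e_{HK}(J^k)=\lim_{q\to\infty}\ell(R/(J^{[q]})^k)/q^d$ extracts exactly the $j=d$ summand of the closed form, namely $e_0(J)\binom{k+d-1}{d}$; since the closed form holds for all $k\ge 1$, this is the asserted value of $e_{HK}(J^k)$.

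I expect the real work to be the combinatorial bookkeeping of the middle step: one must check that every subface appearing in the M\"obius inversion is a face of $\Delta$ (so the parameter-ideal formula legitimately applies to each $S_G$) and that no cancellation happens in the top-degree terms---i.e.\ the degree-$d$ part in $n$, equivalently the coefficient of $q^d$ for fixed $n$, is nonzero. This is painless here, because only the nonempty top-dimensional facets contribute to that part and each contributes with the strictly positive weight $\prod_{i\in F}v_i$, so the leading terms add without interference; no positivity is needed for the lower $e_i$, which may have either sign. An alternative to the monomial count is to use the prime filtration of $R$ coming from its facets and chase the short exact sequences $0\to R/P_F\to\cdots$, but the explicit count keeps every constant visible and the dependence on $q$ and $n$ transparent.
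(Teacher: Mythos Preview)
Your argument is correct, and it reaches the same structural endpoint as the paper---namely, an identity of the shape
\[
\ell\!\left(R/(J^{[q]})^n\right)=\sum_{j=0}^{d} c_j\, q^{j}\binom{n+j-1}{j}
\]
with integers $c_j$ depending only on $\Delta$ and the $v_i$---from which the vanishing of $\lim_{q}e_i(J^{[q]})/q^d$ for $i\ge 1$ and the formula for $e_{HK}(J^k)$ are immediate. The route, however, is genuinely different. The paper works algebraically through the primary decomposition $I_\Delta=\bigcap_{F\ \text{facet}}\pp_F$: it proves an inclusion--exclusion identity for lengths (Lemma~\ref{decompose}), applies the parameter-ideal formula in each quotient $S/(\pp_{i_1}+\cdots+\pp_{i_t})\simeq K[x_j:j\in F_{i_1}\cap\cdots\cap F_{i_t}]$ (Lemma~\ref{polypower}), and only then reads off that each $e_i(J^{[q]})$ is an integer multiple of $q^{d-i}$. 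You instead count standard monomials directly, stratifying by the exact support $F\in\Delta$ and using M\"obius inversion over the face poset to express everything through the polynomial rings $S_G=K[x_i:i\in G]$.

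What each approach buys: the paper's argument is modular and cites out the inclusion--exclusion step, keeping the proof short and essentially coordinate-free; it never needs to name the constants $c_j$. Your approach is more explicit: the identity
\[
(-1)^i e_i(J^{[q]})=q^{d-i}\sum_{\substack{G\in\Delta\\ |G|=d-i}}\mu_G\prod_{j\in G}v_j,\qquad \mu_G=\sum_{\substack{F\in\Delta\\ F\supseteq G}}(-1)^{|F|-|G|},
\]
records each Hilbert coefficient as a weighted reduced Euler characteristic of the link of $G$, which is additional combinatorial information not visible in the paper's proof. Your remark that the top-degree term is a positive sum over $d$-element faces (all of which are maximal, hence have $\mu_G=1$) is exactly what guarantees that the leading coefficient is $e_0(J)$, matching the associativity formula. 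Both proofs are valid; yours trades brevity for an explicit closed form.
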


Smirnov, in the same paper, also proposed the following conjecture.
\begin{Conjecture}
	Let $(R,\mm)$ be a Cohen-Macaulay local ring. Then an $\mm$-primary ideal $I$ is stable if and only if $\lim\limits_{q \rightarrow \infty}	e_1(I^{[q]})/q^d = e_0(I) - e_{HK}(I).$
\end{Conjecture}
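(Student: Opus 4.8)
The plan is to treat the two implications separately. For the direction ``$I$ stable $\Rightarrow$ equality'': if $I$ is stable then $r(I)=1$, and since (over the Cohen--Macaulay ring $R$) reduction number one is known to make $G(I)$ Cohen--Macaulay, one expects the bound $\depth G(I^{[q]})\ge d-1$ to persist for all large $q$; granting this, the Cohen--Macaulay theorem of the introduction applied with $r=1$ yields
\[\lim_{q\to\infty}\frac{e_1(I^{[q]})}{q^d}=e_0(I)-\sum_{j=0}^{d}(-1)^j\binom{d}{j}e_{HK}(I^{1-j})=e_0(I)-e_{HK}(I),\]
the last equality using the convention $e_{HK}(I^m)=0$ for $m\le 0$. (Existence of the limit is in any case guaranteed by Smirnov's result recalled above.) So I expect this half to go through, the one loose end being the depth estimate for the Frobenius powers of a stable ideal.

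For the converse, put $\delta(n):=e_0(I)-\sum_{j=0}^{d}(-1)^j\binom{d}{j}e_{HK}(I^{n-j})$, so that --- whenever the depth hypothesis holds --- the same theorem reads $\lim_{q\to\infty}e_1(I^{[q]})/q^d=\sum_{n=1}^{r}\delta(n)$ with $\delta(1)=e_0(I)-e_{HK}(I)$. Hence the equality asserted in the conjecture is equivalent to $\sum_{n=2}^{r}\delta(n)=0$: a \emph{single} linear relation among the Hilbert--Kunz multiplicities of the powers of $I$. There is no structural reason a non-stable ideal (with $r\ge 2$) should be barred from satisfying it, so rather than try to prove the converse I would hunt for a counterexample.

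The cleanest place to look is $\dim R=1$. There the depth hypothesis is vacuous, and $e_{HK}(J)=e_0(J)$ for \emph{every} $\mm$-primary ideal $J$ (standard in dimension one: sandwich $(a^q)\subseteq J^{[q]}\subseteq J^q$ for a principal reduction $(a)$ of $J$ and let $q\to\infty$). Thus $\delta(n)=0$ for every $n$, and $\lim_{q\to\infty}e_1(I^{[q]})/q=0=e_0(I)-e_{HK}(I)$ holds for \emph{every} $\mm$-primary $I$; so any non-stable $\mm$-primary ideal in a one-dimensional Cohen--Macaulay local ring of characteristic $p$ already refutes the conjecture. Concretely I would take $K$ an infinite field of characteristic $p$, $R=K[[t^3,t^4]]$, and $I=\mm=(t^3,t^4)$: a short computation with $t$-orders shows that $t^8\in\mm^2$ lies in no ideal $(a)\mm$ with $(a)$ a minimal reduction of $\mm$, so $r(\mm)=2$ and $\mm$ is not stable, whereas $\mm^{[q]}=(t^{3q},t^{4q})=(t^{3q})$ for all $q\ge 6$ (since then $q\in\langle 3,4\rangle$, so $t^{4q}\in(t^{3q})$), making $\mm^{[q]}$ a parameter ideal and forcing $e_1(\mm^{[q]})=0$ for $q\gg 0$. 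Should one insist that the conjecture is interesting only for $\dim R\ge 2$, I would instead enforce the lone identity $\sum_{n=2}^{r}\delta(n)=0$ directly --- for instance, in dimension two with $r=2$ one needs $e_{HK}(I^2)=e_0(I)+2\,e_{HK}(I)$ --- most plausibly by a computer search over monomial ideals in a Cohen--Macaulay face ring, where the earlier sections give hands-on control of $\ell(R/(J^{[q]})^k)$.

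The real obstacle is not a hard argument but spotting that the statement is false; once that is suspected, what remains is routine: the bookkeeping with the formula of the Cohen--Macaulay theorem above, plus the two verifications for the candidate pair --- that $r(I)\ge 2$, and that $e_1(I^{[q]})/q^{d}$ nevertheless has the ``stable-looking'' limit --- and in the $K[[t^3,t^4]]$ example the latter collapses to the elementary fact that $t^{4q}\in(t^{3q})$ for $q\gg 0$.
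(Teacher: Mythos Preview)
Your proposal is correct: the conjecture is false, and your one-dimensional counterexample $R=K[[t^3,t^4]]$, $I=\mm$ works. In dimension one the depth hypothesis of the main theorem is vacuous, the paper's own Corollary~2.4 gives $L_1(I)=0$ for every $\mm$-primary $I$, and (as you argue via the sandwich $(a^q)\subseteq I^{[q]}\subseteq I^q$) one has $e_{HK}(I)=e_0(I)$ always; so the conjectured equation holds for \emph{every} $\mm$-primary $I$, while $\mm=(t^3,t^4)$ is not stable since $e_1(\mm)=3\ne 2=e_0(\mm)-\ell(R/\mm)$.

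The paper takes a genuinely different route: it works in dimension two, using the face-ring machinery of Section~3. It exhibits the Stanley--Reisner ring of a triangle with a pendant edge, a $2$-dimensional Cohen--Macaulay ring, and shows via Theorem~\ref{strings} that $e_0(\nn)=e_{HK}(\nn)=4$ and $\lim_q e_1(\nn^{[q]})/q^2=0$, while the Hilbert series $(1+2z+z^2)/(1-z)^2$ forces $r(\nn)=2$. Your approach is more elementary and makes the failure structurally transparent --- in dimension one the conjectured equation is a tautology, so it cannot possibly detect stability. The paper's approach, by contrast, shows the conjecture fails even in dimension $\ge 2$ (arguably the interesting range), and moreover exhibits an example with $\depth G(\nn^{[q]})\ge d-1$ for all $q$, so the failure is not an artifact of bad associated graded rings. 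Your closing remark --- that for a $2$-dimensional example one should search monomial ideals in Cohen--Macaulay face rings --- is precisely what the paper does.

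One small clarification on your ``loose end'' for the forward implication: if $I$ is stable with minimal reduction $J$, then $(I^{[q]})^2=(I^2)^{[q]}=(JI)^{[q]}=J^{[q]}I^{[q]}$, so $I^{[q]}$ is stable too; in a Cohen--Macaulay ring this forces $G(I^{[q]})$ to be Cohen--Macaulay, and the depth hypothesis is automatic. So that half does go through without residue.
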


It was perhaps motivated by a result of Huneke and Ooishi, which characterized the stability of an ideal using the Northcott's inequality. In section 4, we give a counter example to the conjecture.
In fact, using Theorem \ref{HKSC} above, it follows that for any ideal $J$ of a face ring $R$ of the form $(x_1^{v_1},\ldots,x_r^{v_r})R$ where $v_i >0$ for all $i,$ $\lim\limits_{q \rightarrow \infty}	e_1(J^{[q]})/q^d = e_0(J) - e_{HK}(J).$ This means that one side of the conjecture is always true for such ideals. In particular, the conjecture is false whenever $J$ is not a stable ideal. The following example illustrates one such case.


\begin{Example}{\rm 
	Let $\Delta$ be the simplicial complex 
	\begin{center}
		\begin{tikzpicture}
			\draw (0,0) -- (1.5,0);
			\draw (1.5,0) -- (2.5,1) -- (3.5,0) -- cycle;
			\filldraw (0,0) circle (2pt) node[anchor=north]{$x_1$};		
			\filldraw (1.5,0) circle (2pt) node[anchor=north]{$x_2$};
			\filldraw (2.5,1) circle (2pt) node[anchor=west]{$x_3$};
			\filldraw (3.5,0) circle (2pt) node[anchor=north]{$x_4$};
		\end{tikzpicture}
	\end{center}
	Let $K$ be a field. Then $R = K[x_1,x_2,x_3,x_4]/((x_3,x_4) \cap (x_1,x_3) \cap (x_1,x_4) \cap (x_1,x_2))$ is the face ring of $\Delta.$ Observe that $R$ is a $2$-dimensional Cohen-Macaulay ring.
	 Let $\nn = (x_1,x_2,x_3,x_4)$ denote the maximal ideal of $R.$ Using Theorem \ref{HKSC}, it follows that $4 = e_0(\nn) = e_{HK}(\nn)$ and $\lim\limits_{q \rightarrow \infty} e_1(\nn^{[q]})/q^2=0.$ We prove that $\nn$ is not stable in section 4.
	
}\end{Example}

\section{Hilbert-Kunz multiplicity of powers of ideals}
In this section we answer Smirnov's questions,  mentioned in the introduction, in  the affirmative in certain cases. We show that both the questions have answers in the affirmative for parameter ideals in  Buchsbaum local rings. We then  consider the questions  for $\mm$-primary ideals $I$  in Cohen-Macaulay local rings $(R,\mm)$ of dimension $d$
subject to the condition  $\depth G(I^{[q]})\geq d-1.$ This is a strong condition. However, we show that it is satisfied in several examples (in sections 3 and 4).
We begin by considering the case of Buchsbaum local rings.

\begin{Theorem}
	Let $(R,\mm)$ be a Buchsbaum ring of dimension $d$ with prime characteristic $p$ and let $I$ be a parameter ideal. Then for all $i = 1,\ldots,d,$ $\lim\limits_{q \rightarrow \infty} e_i(I^{[q]})/q^d=0$ and for all $k \geq 1,$
	\begin{align*}
		e_{HK}(I^k) = e_0(I) \binom{k+d-1}{d}.
	\end{align*}
\end{Theorem}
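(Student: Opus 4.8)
The plan is to exploit the structure theory of Buchsbaum rings, specifically the fact that for a parameter ideal $I = (a_1,\ldots,a_d)$ in a Buchsbaum local ring $(R,\mm)$ of dimension $d$, the Hilbert-Samuel function has a very rigid form. Recall that if $R$ is Buchsbaum with Buchsbaum invariant $I(R) = \sum_{i=0}^{d-1}\binom{d-1}{i}\ell(H^i_{\mm}(R))$, then for any parameter ideal $J$ and all $n \geq 1$,
\[
\ell(R/J^{n}) = e_0(J)\binom{n+d-1}{d} - I(R)\binom{n+d-2}{d-1},
\]
so that $e_1(J) = I(R)$ (independent of $J$!) and $e_i(J) = 0$ for $i \geq 2$. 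The key observation is that $I^{[q]} = (a_1^q,\ldots,a_d^q)$ is again a parameter ideal of $R$, so the same formula applies with $J = I^{[q]}$: we get $e_1(I^{[q]}) = I(R)$ for all $q$ and $e_i(I^{[q]}) = 0$ for $i \geq 2$. Hence $\lim_{q\to\infty} e_i(I^{[q]})/q^d = 0$ for all $i = 1,\ldots,d$ immediately, since $e_1(I^{[q]}) = I(R)$ is a constant and the higher coefficients vanish.

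For the Hilbert-Kunz multiplicity statement, I would first compute $\ell(R/(I^{[q]})^k)$ using the displayed Buchsbaum formula applied to the parameter ideal $I^{[q]}$, valid for all $k \geq 1$:
\[
\ell\!\left(\frac{R}{(I^{[q]})^{k}}\right) = e_0(I^{[q]})\binom{k+d-1}{d} - I(R)\binom{k+d-2}{d-1}.
\]
Next I need $e_0(I^{[q]})$. Since $I^{[q]}$ is generated by $a_1^q,\ldots,a_d^q$ and these form a system of parameters, the multiplicity of a parameter ideal generated by $q$-th powers scales: $e_0(I^{[q]}) = q^d\, e_0(I)$. (This follows from the associativity/multiplicativity of multiplicity for parameter ideals, or directly from $\ell(R/(a_1^q,\ldots,a_d^q)) $ comparisons; one can also cite $e_0((a_1^{q},\ldots,a_d^{q})) = q^{d} e_0((a_1,\ldots,a_d))$ as a standard fact.) Substituting and dividing by $q^d$, then letting $q \to \infty$:
\[
\frac{1}{q^d}\,\ell\!\left(\frac{R}{(I^{[q]})^{k}}\right) = e_0(I)\binom{k+d-1}{d} - \frac{I(R)}{q^d}\binom{k+d-2}{d-1} \longrightarrow e_0(I)\binom{k+d-1}{d},
\]
which is exactly $e_{HK}(I^k) = e_0(I)\binom{k+d-1}{d}$.

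The main obstacle — really the only nontrivial input — is justifying the exact polynomial formula for the Hilbert-Samuel function of a parameter ideal in a Buchsbaum ring, and in particular that it holds for \emph{all} $n \geq 1$ with no lower-order error term. This is where the Buchsbaum hypothesis is essential; it is a theorem going back to work of St\"uckrad–Vogel and Schenzel on Buchsbaum rings, and I would simply quote it. One should take mild care that $I^{[q]}$ is genuinely a parameter ideal (its radical is $\mm$ since it contains $a_i^q$ and $\sqrt{(a_1,\ldots,a_d)} = \mm$) and that the Buchsbaum invariant $I(R)$ appearing is intrinsic to $R$, not to the chosen parameter ideal — both are standard. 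The multiplicity scaling $e_0(I^{[q]}) = q^d e_0(I)$ is elementary for parameter ideals and deserves a one-line justification but no more.
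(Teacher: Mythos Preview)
Your approach is essentially the same as the paper's: note that $I^{[q]}$ is again a parameter ideal, invoke the known structure of Hilbert--Samuel coefficients of parameter ideals in Buchsbaum rings to see that $e_i(I^{[q]})$ is independent of $q$ for $i\ge 1$, and use that the Hilbert function equals the Hilbert polynomial for all $k\ge 1$. The paper cites \cite[Corollary~4.2]{trungSuperficial} for exactly these two facts and then divides by $q^d$ and passes to the limit, just as you do.

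There is, however, a genuine error in the specific formula you quote. The claim
\[
\ell(R/J^{n}) \;=\; e_0(J)\binom{n+d-1}{d} \;-\; I(R)\binom{n+d-2}{d-1}
\]
for a parameter ideal $J$ in a Buchsbaum ring is \emph{not} correct in general. First, it has the wrong sign already at $n=1$: the Buchsbaum identity is $\ell(R/J)=e_0(J)+I(R)$, not $e_0(J)-I(R)$. Second, and more importantly, the higher coefficients $e_i(J)$ for $i\ge 2$ need not vanish; for example $e_d(J)=(-1)^d\,\ell(H^0_{\mm}(R))$, and in dimension $3$ one has $e_2(J)$ proportional to $\ell(H^1_{\mm}(R))$. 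The correct statement (and the one the paper uses) is that each $e_i(J)$, $1\le i\le d$, is a fixed integer combination of the lengths $\ell(H^j_{\mm}(R))$ depending only on $R$, hence constant as $J$ ranges over parameter ideals. Once you replace your two-term formula by the full polynomial with these constant lower coefficients, every step of your argument goes through verbatim: the limits $\lim_{q\to\infty}e_i(I^{[q]})/q^d$ are still $0$, and dividing
\[
\ell\big(R/(I^{[q]})^{k}\big)=e_0(I^{[q]})\binom{k+d-1}{d}+\sum_{i=1}^{d}(-1)^i e_i(I^{[q]})\binom{k+d-1-i}{d-i}
\]
by $q^d$ still yields $e_{HK}(I^k)=e_0(I)\binom{k+d-1}{d}$.
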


\begin{proof}
	Since $I$ is a parameter ideal, then so is $I^{[q]}$ for all $q.$ Therefore, using \cite[Corollary 4.2]{trungSuperficial}, we obtain that for all $i = 1,\ldots,d,$
	\begin{align}   \label{thm1:eq1}
		e_i(I^{[q]}) = (-1)^i \sum_{j=0}^{d-i} \binom{d-i-1}{j-1} \ell(H^i_{\mm}(R)),
	\end{align}
	where $\binom{d-i-1}{-1} = 0$ if $i \neq d$ and $\binom{-1}{-1} = 1.$ Also, for all $k \geq 1,$
	\begin{align}  \label{thm1:eq2}
		\ell\left(\frac{R}{I^{[q]k}}\right) = e_0(I^{[q]}) \binom{k+d-1}{d} + \sum_{i=1}^{d} e_i(I^{[q]}) \binom{k+d-1-i}{d-i}. 
	\end{align}
	From \eqref{thm1:eq1}, it follows that for all $i=1,\ldots,d,$ $e_i(I^{[q]})$ is independent of $q$ and hence using \eqref{thm1:eq2}, we get the required result.
\end{proof}

The next result answers Smirnov's questions for $\mm$-primary ideals  $I$ in Cohen-Macaulay local rings of prime characteristic $p$ and dimension $d\geq 1$ and $\depth G(I^{[q]})\geq d-1$ for all large $q.$  Recall that for a function $f: \ZZ\to \ZZ$,  $\Delta(f)(n)=f(n)-f(n-1).$ Define $\Delta^t(f)=\Delta(\Delta^{t-1}(f))$ for all $t\geq 2.$ We shall use the following results due to S. Huckaba and T. Marley.

\begin{Theorem}[{\cite{huckaba,huckabaMarley}}]    \label{hm}
	Let $(R,\mm)$ be a Cohen-Macaulay local ring of dimension $d\geq 1.$ Let $I$ be an $\mm$-primary ideal with a minimal reduction $J=(a_1,a_2, \ldots, a_d).$ Suppose that $r=r(I)$ and  $\depth G(I)\geq d-1.$ Then for all $n\in \ZZ,$
	\begin{enumerate}[{\rm(1)}]
		\item $\Delta^d[P_I(n)-H_I(n)]=\ell(I^{n}/JI^{n-1})$.
		\item $\Delta^d P_I(n)=e_0(I)$ and $\Delta^d(H_I(n)) = \sum_{j=0}^d (-1)^j \binom{d}{j} \ell(R/I^{n-j}).$
		\item $e_i(I)=\sum_{n=i}^r\binom{n-1}{i-1}\ell(I^n/JI^{n-1})$ for $i=1,2,\ldots, d.$
	\end{enumerate}
\end{Theorem}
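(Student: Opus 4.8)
The three assertions come in increasing order of difficulty, so the plan is to dispose of (2) first, feed it into (1), and then extract (3) from (1). Throughout one may assume the residue field is infinite, passing to $R[X]_{\mm R[X]}$ if necessary; this is faithfully flat and preserves Cohen--Macaulayness, the dimension, $\depth G(I)$, all the relevant lengths, and the reduction number. I would also invoke at the outset the companion fact that, under the hypothesis $\depth G(I)\ge d-1$, the integer $r_J(I)$ is the same for every minimal reduction $J$, so it equals $r(I)=r$; this lets me choose $J=(a_1,\ldots,a_d)$ so that the initial forms $a_1^\ast,\ldots,a_{d-1}^\ast$ form a regular sequence on $G(I)$ (possible precisely because $\depth G(I)\ge d-1$), without affecting the statements to be proved.

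Part (2) is finite-difference calculus: writing $P_I$ in the binomial basis and applying Pascal's identity $d$ times kills every term but the leading one, giving $\Delta^dP_I(n)=e_0(I)$; expanding $\Delta^d=(1-E^{-1})^d$, where $E^{-1}f(n)=f(n-1)$, by the binomial theorem gives $\Delta^dH_I(n)=\sum_{j=0}^d(-1)^j\binom{d}{j}\ell(R/I^{n-j})$. For part (1) I would induct on $d$. When $d=1$, $J=(a)$ with $a$ a nonzerodivisor, and the exact sequences
\[
0\to R/I^{n-1}\xrightarrow{\ a\ }R/I^n\to R/(I^n+aR)\to 0,\qquad 0\to I^n/aI^{n-1}\to R/aI^{n-1}\to R/I^n\to 0,
\]
together with $aR/aI^{n-1}\cong R/I^{n-1}$ and $\ell(R/aR)=e_0(I)$, give $\ell(I^n/aI^{n-1})=e_0(I)-\Delta H_I(n)=\Delta[P_I(n)-H_I(n)]$ for all $n$ (the cases $n\le0$ being immediate). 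For the inductive step I set $\bar R=R/a_1R$, Cohen--Macaulay of dimension $d-1$, and $\bar I=I\bar R$. Since $a_1^\ast$ is a nonzerodivisor on $G(I)$, one has $I^n\cap a_1R=a_1I^{n-1}$ for all $n$, which gives $G(\bar I)=G(I)/(a_1^\ast)$ (so $\depth G(\bar I)\ge d-2$), gives $\ell(\bar R/\bar I^n)=\ell(R/I^n)-\ell(R/I^{n-1})$ and hence $H_{\bar I}=\Delta H_I$ and $P_{\bar I}=\Delta P_I$, and (by the modular law) $I^n\cap(JI^{n-1}+a_1R)=JI^{n-1}+a_1I^{n-1}=JI^{n-1}$, whence $\ell(\bar I^n/\bar J\,\bar I^{n-1})=\ell(I^n/JI^{n-1})$ and $r_{\bar J}(\bar I)=r$. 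Applying the inductive hypothesis to $\bar I\subseteq\bar R$ and substituting yields $\Delta^d[P_I(n)-H_I(n)]=\ell(I^n/JI^{n-1})$.

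For part (3) I put $\phi(n)=P_I(n)-H_I(n)$ and $\lambda(m)=\ell(I^m/JI^{m-1})$, so $\Delta^d\phi=\lambda$ by (1), while $\phi(n)=0$ for $n\gg0$ and $\lambda(m)=0$ for $m>r$. For the top coefficient $i=d$: a direct computation shows that $n\mapsto(-1)^d\sum_{m>n}\binom{m-n-1}{d-1}\lambda(m)$ also has $d$-th difference $\lambda$ and vanishes for $n\gg0$; two such functions differ by a polynomial of degree $<d$ that vanishes eventually and hence coincide, so $\phi(n)=(-1)^d\sum_{m>n}\binom{m-n-1}{d-1}\lambda(m)$. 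Evaluating at $n=0$ and using $\phi(0)=P_I(0)-H_I(0)=P_I(0)=(-1)^de_d(I)$ (evaluate the binomial expansion of $P_I$ at $0$) gives $e_d(I)=\sum_{m=d}^r\binom{m-1}{d-1}\ell(I^m/JI^{m-1})$. For $1\le i\le d-1$ (so $d\ge2$) I reduce modulo $a_1$ once more: comparing the binomial expansions of $P_{\bar I}=\Delta P_I$ and of $P_I$ shows $e_i(\bar I)=e_i(I)$ for $i\le d-1$, and together with $\ell(\bar I^m/\bar J\,\bar I^{m-1})=\ell(I^m/JI^{m-1})$ and $r(\bar I)=r$, the inductive hypothesis applied to $\bar I$ gives the formula for $e_i(I)$.

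The genuine content---and the step I expect to be the main obstacle---is the superficial-element input behind the inductive step: that $\depth G(I)\ge d-1$ permits the choice of a minimal reduction whose first $d-1$ generators have $G(I)$-regular initial forms; that such an initial form $a_1^\ast$ being $G(I)$-regular forces $I^n\cap a_1R=a_1I^{n-1}$ for \emph{all} $n\ge1$ (the Valabrega--Valla phenomenon); and that the reduction number is unchanged on passing to $R/a_1R$. Once these standard but nontrivial facts are in hand, the remainder is bookkeeping with short exact sequences and binomial coefficients.
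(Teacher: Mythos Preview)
The paper does not prove this theorem: it is quoted verbatim from the cited sources \cite{huckaba,huckabaMarley} and used as a black box in the proof of the next result. There is therefore no ``paper's own proof'' to compare your sketch against.

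That said, your sketch is a faithful and correct outline of the argument in those references. Part~(2) is pure difference calculus; part~(1) is proved by induction on $d$ via reduction modulo a superficial element whose initial form is $G(I)$-regular (the Valabrega--Valla input you flag is exactly what is used there); and part~(3) is obtained either by your inversion formula $\phi(n)=(-1)^d\sum_{m>n}\binom{m-n-1}{d-1}\lambda(m)$ evaluated at $n=0$, or equivalently (as in Huckaba's paper) from the power-series identity encoding the same finite-difference inversion. Your handling of the reduction step---in particular, the verification that $\ell(\bar I^n/\bar J\bar I^{n-1})=\ell(I^n/JI^{n-1})$ and $r_{\bar J}(\bar I)=r$---is precisely the standard argument, and your remark that the ``genuine content'' lies in the superficial-element machinery is on the mark.
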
 
 
\begin{Theorem}
	Let $(R,\mm)$ be a Cohen-Macaulay local ring of dimension $d\geq 1$  and prime characteristic $p.$ Let $I$ be an $\mm$-primary ideal with a minimal reduction $J=(a_1,a_2, \ldots, a_d).$ Suppose that $r=r(I)$ and  $\depth G(I^{[q]})\geq d-1$  for all large $q.$ Then
	\begin{enumerate}[{\rm(1)}]
		\item For all $i=1,2,\ldots, d$ the limit $L_i(I):=\lim_{q\to \infty} e_i(I^{[q]})/q^d$ exists and
		\[L_i(I)=\lim_{q\to \infty} \frac{e_i(I^{[q]})}{q^d}=\sum_{n=i}^r\binom{n-1}{i-1} \left[e_0(I)-\sum_{j=0}^d(-1)^j\binom{d}{j} e_{HK}(I^{n-j})\right]
		.\]
		\item For all $n \geq r-d+1,$
		\[\ell\left(\frac{R}{(I^{[q]})^n}\right)=\sum_{i=0}^d (-1)^ie_i(I^{[q]})\binom{n+d-1-i}{d-i} \]
		and hence for all $n\geq r-d+1,$ we have
		\[ e_{HK}(I^n)=\sum_{i=0}^d(-1)^iL_i(I)\binom{n+d-1-i}{d-i}.\] 
	\end{enumerate}
\end{Theorem}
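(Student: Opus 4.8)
The plan is to transfer the Huckaba--Marley formulas (Theorem~\ref{hm}) from $I$ to each Frobenius power $I^{[q]}$ and then pass to the limit $q \to \infty$. The engine is the identity $(I^{[q]})^n = (I^n)^{[q]}$, which holds because in characteristic $p$ one has $\aa^{[q]} = (g_1^q, \ldots, g_t^q)$ for any generating set $g_1, \ldots, g_t$ of an ideal $\aa$ (since $(x+y)^q = x^q + y^q$): applying this to the generating set of $I^n$ consisting of products of $n$ generators of $I$, both $(I^{[q]})^n$ and $(I^n)^{[q]}$ are generated by the $q$-th powers of those products. I would record three consequences. (i) $\ell(R/(I^{[q]})^n) = \ell(R/(I^n)^{[q]})$, so Monsky's theorem gives $\ell(R/(I^{[q]})^n)/q^d \to e_{HK}(I^n)$. (ii) Applying $[q]$ to $J I^r = I^{r+1}$ yields $J^{[q]}(I^{[q]})^r = (I^{[q]})^{r+1}$, so $J^{[q]} = (a_1^q, \ldots, a_d^q)$ is a minimal reduction of $I^{[q]}$ (it has $d$ generators, and the analytic spread of $I^{[q]}$ is $d$), and $r(I^{[q]}) \le r_{J^{[q]}}(I^{[q]}) \le r$ for every $q$. (iii) Since $R$ is Cohen--Macaulay and $J^{[q]}$ is generated by a system of parameters, $e_0(I^{[q]}) = e_0(J^{[q]}) = \ell(R/J^{[q]}) = q^d \ell(R/J) = q^d e_0(I)$, where $\ell(R/(a_1^q,\ldots,a_d^q)) = q^d \ell(R/(a_1,\ldots,a_d))$ follows by a short induction on $d$ (for $d=1$ the filtration $R \supseteq (a_1) \supseteq \cdots \supseteq (a_1^q)$ has all successive quotients isomorphic to $R/(a_1)$).

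For part (1), fix $q$ large enough that $\depth G(I^{[q]}) \ge d - 1$ and apply Theorem~\ref{hm} to $I^{[q]}$. Parts (1) and (2) of that theorem combine to give
\[ \ell\!\left( \frac{(I^{[q]})^n}{J^{[q]}(I^{[q]})^{n-1}} \right) = \Delta^d P_{I^{[q]}}(n) - \Delta^d H_{I^{[q]}}(n) = e_0(I^{[q]}) - \sum_{j=0}^{d} (-1)^j \binom{d}{j}\, \ell\!\left( R/(I^{[q]})^{n-j} \right), \]
and part (3), with its summation harmlessly extended from $r(I^{[q]})$ up to $r$ (the extra terms vanish), gives, for $i = 1, \ldots, d$,
\[ e_i(I^{[q]}) = \sum_{n=i}^{r} \binom{n-1}{i-1}\left[\, e_0(I^{[q]}) - \sum_{j=0}^{d} (-1)^j \binom{d}{j}\, \ell\!\left( R/(I^{[q]})^{n-j} \right) \right]. \]
Dividing by $q^d$ and letting $q \to \infty$, consequences (i) and (iii) above (with the convention $e_{HK}(I^m) = 0$ for $m \le 0$) show that $L_i(I)$ exists and equals the asserted expression.

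For part (2), I would first pin down where the Hilbert function of $I^{[q]}$ becomes polynomial. By Theorem~\ref{hm}(1), $\Delta^d\big(P_{I^{[q]}}(n) - H_{I^{[q]}}(n)\big) = 0$ for $n \ge r(I^{[q]}) + 1$; since this difference vanishes for $n \gg 0$, while any $\ZZ$-valued function annihilated by $\Delta^d$ for $n \ge s+1$ coincides for $n \ge s+1-d$ with the polynomial of degree $< d$ interpolating its values at $n = s+1-d, \ldots, s$ --- which here is forced to be $0$ --- we get $H_{I^{[q]}}(n) = P_{I^{[q]}}(n)$ for $n \ge r(I^{[q]}) - d + 1$, hence for $n \ge r - d + 1$ (this is Marley's postulation-number bound). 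Therefore, for such $n$ and all large $q$,
\[ \ell\!\left( R/(I^{[q]})^n \right) = P_{I^{[q]}}(n) = \sum_{i=0}^{d} (-1)^i e_i(I^{[q]}) \binom{n+d-1-i}{d-i}. \]
Dividing by $q^d$, letting $q \to \infty$, and invoking part (1) together with $e_0(I^{[q]})/q^d = e_0(I) =: L_0(I)$ and $\ell(R/(I^{[q]})^n)/q^d \to e_{HK}(I^n)$ gives the stated formula for $e_{HK}(I^n)$, valid for $n \ge r - d + 1$.

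I expect the proof to be mostly bookkeeping once the identity $(I^{[q]})^n = (I^n)^{[q]}$ is in place: it is precisely this identity that links the Hilbert--Kunz functions of the \emph{ordinary} powers $I^n$ (which, via Monsky, carry the limiting data) to the Hilbert--Samuel coefficients of the \emph{Frobenius} powers $I^{[q]}$. The one point to watch is uniformity of the hypotheses in $q$, but $r(I^{[q]}) \le r$ holds for every $q$ and the depth condition is assumed for all large $q$, so limits may be taken freely; as usual I would also reduce at the outset to the case of infinite residue field (via the faithfully flat map $R \to R[t]_{\mm R[t]}$, which preserves all the relevant invariants), so that a minimal reduction $J = (a_1, \ldots, a_d)$ generated by a system of parameters is available.
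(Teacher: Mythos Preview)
Your proof is correct and follows essentially the same route as the paper's: transfer Theorem~\ref{hm} to $I^{[q]}$ using $r(I^{[q]})\le r$, expand $e_i(I^{[q]})$ via the Huckaba--Marley formula, divide by $q^d$, and pass to the limit; likewise for part~(2) via the postulation bound. You spell out explicitly several points the paper leaves implicit --- the identity $(I^{[q]})^n=(I^n)^{[q]}$ (which is what justifies $\lim_{q\to\infty}\ell(R/(I^{[q]})^n)/q^d=e_{HK}(I^n)$ via Monsky), the equality $e_0(I^{[q]})=q^d e_0(I)$, and the derivation of the postulation bound from the vanishing of $\Delta^d(P-H)$ --- but the architecture is identical.
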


\begin{proof}
	Since $JI^r=I^{r+1},$ it follows that $J^{[q]}(I^{[q]})^r=(I^{[q]})^{r+1}.$ Hence $r(I^{[q]})\leq r$ for all $q.$ Since $\depth G(I^{[q]})\geq d-1$ for all large $q,$ $P_{I^{[q]}}(n)=H_{I^{[q]}}(n)$ for all $n\geq r-d+1.$ For all $i=1,2,\ldots, d,$ by Theorem \ref{hm}(3),
	\[ e_i(I^{[q]}) = \sum_{n=i}^r \binom{n-1}{i-1} \ell\left(\frac{(I^{[q]})^n}{J^{[q]}(I^{[q]})^{n-1}}\right).\]
	Let us find $\beta_I(n):=\ell(I^n/JI^{n-1}).$ By Theorem \ref{hm}, for all $n\in \ZZ,$ $\Delta^d(P_I(n)-H_I(n))=\ell(I^n/JI^{n-1}).$ Therefore
	\[ \beta_I(n) = e_0(I) - \Delta^d(H_I(n)) = e_0(I) - \sum_{j=0}^d (-1)^j \binom{d}{j} \ell(R/I^{n-j}).\]
	Since $\depth G(I^{[q]})\geq d-1$ for large $q,$ we have
	\[ \beta_{I^{[q]}}(n) = q^de_0(I) - \Delta^d(H_{I^{[q]}}(n)) = q^d e_0(I) - \sum_{j=0}^d (-1)^j \binom{d}{j} \ell(R/(I^{[q]})^{n-j}).\]
	Divide by $q^d$ and then take the limit to get
	\[ \lim_{q \to \infty} \frac{\beta_{I^{[q]}}(n)}{q^d} = e_0(I) - \sum_{j=0}^d (-1)^j \binom{d}{j} e_{HK}(I^{n-j}) .\]
	Therefore we have
	\begin{align}\label{L_i(I)} 
		L_i(I)=\lim_{q\to \infty} \frac{e_i(I^{[q]})}{q^d} = \sum_{n=i}^r \binom{n-1}{i-1} \left[e_0(I) - \sum_{j=0}^d (-1)^j \binom{d}{j} e_{HK}(I^{n-j})\right].
	\end{align}

Since $r(I^{[q]})\leq r,$ for all $n\geq r-d+1,$ $H_{I^{[q]}}(n)=P_{I^{[q]}}(n).$ Hence for all $n\geq r-d+1,$
\[\ell(R/(I^{[q]})^n)=\sum_{i=0}^d (-1)^ie_i(I^{[q]})\binom{n+d-1-i}{d-i}.\]
Divide by $q^d$ and take the limit to see that for all $n\geq r-d+1,$
\[e_{HK}(I^n)=\sum_{i=0}^d(-1)^iL_i(I)\binom{n+d-1-i}{d-i}.\]
\end{proof}

\begin{Corollary} 
	Let $(R,\mm)$ be a one-dimensional Cohen-Macaulay local ring of prime characteristic $p.$ Let $J=(a)$ be a minimal reduction of an $\mm$-primary ideal $I$ with $JI^r=I^{r+1}.$ Then $L_1(I)=0.$ 
\end{Corollary}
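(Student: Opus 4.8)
The plan is to specialize the preceding theorem to $d=1$. In a one-dimensional ring the hypothesis $\depth G(I^{[q]})\ge d-1=0$ holds automatically, so the theorem applies, with reduction number $r=r(I)$ (which is $\le r_J(I)$). Putting $d=1$, $i=1$ in part (1), so that $\binom{n-1}{0}=1$ and $\sum_{j=0}^{1}(-1)^j\binom{1}{j}e_{HK}(I^{n-j})=e_{HK}(I^{n})-e_{HK}(I^{n-1})$, gives
\[ L_1(I)=\sum_{n=1}^{r}\Big[e_0(I)-e_{HK}(I^{n})+e_{HK}(I^{n-1})\Big], \]
with $e_{HK}(I^{0})=e_{HK}(R)=0$. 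This telescopes to $L_1(I)=r\,e_0(I)-e_{HK}(I^{r})$, so it suffices to compute $e_{HK}(I^{r})$.

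The key (classical) fact is that $e_{HK}(K)=e_0(K)$ for every $\mm$-primary ideal $K$ of a one-dimensional Cohen-Macaulay local ring. Indeed, if $K$ is generated by $g$ elements then $K^{g(q-1)+1}\subseteq K^{[q]}\subseteq K^{q}$, since any product of $g(q-1)+1$ generators of $K$ involves one generator to a power at least $q$; hence $\ell(R/K^{q})\le\ell(R/K^{[q]})\le\ell(R/K^{g(q-1)+1})$, and because $\ell(R/K^{n})=e_0(K)\,n-e_1(K)$ for $n\gg0$, dividing by $q$ and letting $q\to\infty$ squeezes $e_{HK}(K)$ between $e_0(K)$ and $e_0(K)$. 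Applying this with $K=I^{r}$ and using $e_0(I^{r})=r\,e_0(I)$ in dimension one yields $e_{HK}(I^{r})=r\,e_0(I)$, whence $L_1(I)=r\,e_0(I)-r\,e_0(I)=0$.

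There is no genuine obstacle; the only points worth a careful sentence are the identity $e_{HK}(K)=e_0(K)$ above and the boundary term $e_{HK}(I^{0})=0$ in the telescoping. One could also argue without the theorem: $J^{[q]}=(a^{q})$ is a minimal reduction of $I^{[q]}$ with $r(I^{[q]})\le r$ (as in the theorem's proof), so Theorem \ref{hm}(3) gives $e_1(I^{[q]})=\sum_{n=1}^{r}\ell\big((I^{[q]})^{n}/a^{q}(I^{[q]})^{n-1}\big)$; since $a^{q}$ is a nonzerodivisor with $\ell(R/(a^{q}))=q\,e_0(I)$, multiplication by $a^{q}$ gives $\ell\big((I^{[q]})^{n}/a^{q}(I^{[q]})^{n-1}\big)=q\,e_0(I)+\ell\big(R/(I^{[q]})^{n-1}\big)-\ell\big(R/(I^{[q]})^{n}\big)$, and summing/telescoping produces $e_1(I^{[q]})=r\,q\,e_0(I)-\ell\big(R/(I^{[q]})^{r}\big)$; dividing by $q$ and letting $q\to\infty$ gives $L_1(I)=r\,e_0(I)-e_{HK}(I^{r})=0$.
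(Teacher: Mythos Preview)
Your proof is correct and follows essentially the same route as the paper's: specialize equation \eqref{L_i(I)} to $d=1$, telescope to $L_1(I)=r\,e_0(I)-e_{HK}(I^r)$, and conclude. The paper's proof simply writes ``$=0$'' at the last step without comment, whereas you supply the justification (the well-known identity $e_{HK}(K)=e_0(K)$ in dimension one via the containments $K^{g(q-1)+1}\subseteq K^{[q]}\subseteq K^q$) and also note explicitly why the depth hypothesis of the theorem is vacuous here; your alternative direct computation of $e_1(I^{[q]})$ at the end is a nice bonus.
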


\begin{proof}
Put $i=1$ in the equation \ref{L_i(I)} to get 
\begin{align*}
L_1(I)
= \lim_{q\to \infty} \frac{e_1(I^{[q]})}{q^d}
= & \ \sum_{n=1}^r \left[ e_0(I)-\sum_{j=0}^1(-1)^j e_{HK}(I^{n-j})\right]\\
= & \ re_0(I)-\sum_{n=1}^r\left[e_{HK}(I^n)-e_{HK}(I^{n-1})\right]\\
= & \ re_0(I)-e_{HK}(I^r)\\
= & \ 0.
\end{align*}
\end{proof}

\begin{Corollary}  \label{dim2,r}
Let $(R,\mm)$ be a $2$-dimensional Cohen-Macaulay local ring of characteristic $p$. Let $J=(a, b)$ be a minimal reduction of an $\mm$-primary ideal $I$ with $r_J(I)=r.$
Let   $\depth G(I^{[q]})\geq 1$  for all large $q.$ Then
\begin{align*}
L_1(I)=& \ re_0(I)-e_{HK}(I^r)+e_{HK}(I^{r-1})\\
L_2(I)=& \ \binom{r}{2} e_0(I)-(r-1)e_{HK}(I^r)+re_{HK}(I^{r-1})\\
e_{HK}(I^n)=& \ e_0(I)\binom{n+1}{2}-L_1(I)n+L_2(I) \mbox{ for all } n\geq r-1.
\end{align*}
\end{Corollary}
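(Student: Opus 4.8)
The plan is to specialize the preceding Theorem to the case $d = 2$ and then simplify the resulting sums by elementary manipulations. Since $J = (a,b)$ is a minimal reduction of $I$ with reduction number $r$ and $\depth G(I^{[q]}) \ge 1 = d-1$ for all large $q$, the hypotheses needed to invoke formula \eqref{L_i(I)} and part (2) of that Theorem are satisfied. Throughout I write $f(n) := e_{HK}(I^n)$, with the convention $I^m = R$ --- hence $f(m) = 0$ --- for $m \le 0$; this is the convention already in force in the proof of the Theorem, where the terms $e_{HK}(I^{n-j})$ with $n - j \le 0$ are read as $e_{HK}(R) = 0$.

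To get $L_1(I)$ I would put $i = 1$ and $d = 2$ in \eqref{L_i(I)}. The inner alternating sum is $\sum_{j=0}^2 (-1)^j \binom{2}{j} f(n-j) = f(n) - 2f(n-1) + f(n-2) = \Delta^2 f(n)$, so $L_1(I) = r\, e_0(I) - \sum_{n=1}^r \Delta^2 f(n)$. Writing $\Delta^2 f(n) = \Delta f(n) - \Delta f(n-1)$, the sum telescopes to $\Delta f(r) - \Delta f(0) = (f(r) - f(r-1)) - 0 = e_{HK}(I^r) - e_{HK}(I^{r-1})$, which is the first identity. For $L_2(I)$ I would put $i = 2$ and $d = 2$, so that $L_2(I) = e_0(I)\sum_{n=2}^r (n-1) - \sum_{n=2}^r (n-1)\Delta^2 f(n)$; here $\sum_{n=2}^r (n-1) = \binom{r}{2}$, and the remaining sum $\sum_{n=2}^r (n-1)(\Delta f(n) - \Delta f(n-1))$ I would evaluate by Abel summation, which collapses it to $(r-1)\Delta f(r) - \sum_{n=1}^{r-1}\Delta f(n) = (r-1)(e_{HK}(I^r) - e_{HK}(I^{r-1})) - e_{HK}(I^{r-1})$. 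Substituting back yields the stated formula for $L_2(I)$.

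For the last identity I would specialize part (2) of the preceding Theorem to $d = 2$: for $n \ge r - 1 = r - d + 1$ one has $e_{HK}(I^n) = \sum_{i=0}^2 (-1)^i L_i(I)\binom{n+1-i}{2-i} = L_0(I)\binom{n+1}{2} - L_1(I)\, n + L_2(I)$, and it then remains to note that $L_0(I) = \lim_{q\to\infty} e_0(I^{[q]})/q^2 = e_0(I)$. The latter holds because $J^{[q]} = (a^q, b^q)$ is a minimal reduction of $I^{[q]}$ generated by a regular sequence (as $R$ is Cohen-Macaulay and $a,b$ is a system of parameters), whence $\ell(R/J^{[q]}) = q^2\, \ell(R/J)$ and so $e_0(I^{[q]}) = q^2 e_0(I)$. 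I do not expect any genuine obstacle: once the Theorem is available the whole corollary reduces to the two telescoping/Abel computations above, the only points requiring a little care being the bookkeeping of the $m \le 0$ boundary terms and the identification $L_0(I) = e_0(I)$ (which was implicitly used in the Theorem as well).
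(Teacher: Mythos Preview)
Your proposal is correct and follows essentially the same route as the paper: specialize \eqref{L_i(I)} to $d=2$, $i=1,2$ and collapse the resulting sums (the paper writes out the telescoping explicitly where you invoke $\Delta^2 f$ and Abel summation, but the computation is identical), then read off the expression for $e_{HK}(I^n)$ from the preceding theorem. One cosmetic difference: for the last identity you appeal directly to part~(2) of the theorem, whereas the paper re-derives the range $n\ge r-1$ from Marley's postulation-number formula; your shortcut is fine since that bound is already contained in the theorem, and your explicit check that $L_0(I)=e_0(I)$ fills in a point the paper leaves implicit.
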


\begin{proof}
Put $d=2$ and $i=1$ in \ref{L_i(I)} to get
\begin{align*}
L_1(I)=&\sum_{n=1}^r\left[e_0(I)-\sum_{j=0}^2 (-1)^j\binom{2}{j}e_{HK}(I^{n-j})\right]\\
=& \ re_0(I)-\sum_{n=1}^r \left[ e_{HK}(I^n)-2e_{HK}(I^{n-1})+e_{HK}(I^{n-2}) \right]\\
=& \ re_0(I)-e_{HK}(I^r)+e_{HK}(I^{r-1}).
\end{align*}
Now put $d=2$ and $i=2$ in \ref{L_i(I)} to get
\begin{align*}
L_2(I)
=& \ \sum_{n=2}^r(n-1)\left[ e_0(I)-\sum_{j=0}^2(-1)^j\binom{2}{j}e_{HK}(I^{n-j})\right]\\
=& \ \binom{r}{2} e_0(I)- \sum_{n=2}^r(n-1) \left[ e_{HK}(I^n) - 2e_{HK}(I^{n-1}) + e_{HK}(I^{n-2}) \right]\\
=& \ \binom{r}{2} e_0(I)-(r-1)e_{HK}(I^r)+re_{HK}(I^{r-1}).
\end{align*}
	From \cite[Theorem 2.15]{marleyThesis}, it also follows that the postulation number of $I^{[q]},$ $n(I^{[q]}) = r(I^{[q]}) - 2 \leq r-2.$ Thus, for all $n \geq r-1,$
	\begin{align*}
		\ell\left(\frac{R}{I^{n[q]}}\right) = e_0(I^{[q]}) \binom{n+1}{2} - e_1(I^{[q]}) n + e_2(I^{[q]}).
	\end{align*}
	Divide by $q^2$ and take the limit to see that for all $n\geq r-1$ 
	\[e_{HK}(I^n)=e_0(I)\binom{n+1}{2}-L_1(I)n+L_2(I).\]
\end{proof}

\begin{Example}{\rm
	Let $R = K[[x,y,z]]/(x^3-y^3z)$, where $K$ is a field of characteristic $3.$ Let $\mm = (x,y,z)$ denote the maximal ideal of $R.$ Then $I = (y,z)$ is a minimal reduction of $\mm$ and the reduction number of $\mm,$ $r(\mm) = 2.$ Set $q = 3^e,$ $e \in \NN.$ Then $r(\mm^{[q]}) \leq 2.$ We show that $G(\mm^{[q]})$ is Cohen-Macaulay for all large $q.$ Using Valabrega-Valla's result (\cite{VV}), it is sufficient to show that for all $n \geq 1,$ 
	\[ (y^q, z^q) \cap (\mm^{[q]})^n = (y^q,z^q) (\mm^{[q]})^{n-1}. \]
	Since $r(\mm^{[q]}) \leq 2,$ it is enough to show that $(y^q,z^q) \cap (\mm^{[q]})^2 = (y^q,z^q) \mm^{[q]}.$ Consider
	\begin{align*}
		(y^q,z^q) \cap (\mm^{[q]})^2 = (y^q,z^q) \cap (x^{2q}, x^qy^q, x^qz^q, y^{2q}, y^qz^q, z^{2q}).
	\end{align*}
	Let 
	\[ ay^q + bz^q = c x^{2q} + d x^qy^q + e x^qz^q + f y^{2q} + g y^qz^q + h z^{2q}, \]
	for some $a,b,c,d,e,f,g,h \in R.$ Then
	\[ y^q \big( a - dx^q - fy^q - gz^q \big) = z^q \big(-b + ex^q + hz^q \big) + cx^{2q}.  \]
	Since $x^3 = y^3z,$ it follows that 
	\[ x^{2q} = (x^3)^{2.3^{e-1}} = (y^3z)^{2.3^{e-1}} = y^{2q}z^{2q/3}. \]
	Therefore,
	\[ a - dx^q - fy^q - gz^q \in (z^q,y^{2q}) : y^q = (z^q,y^q) \]
	and hence, $a \in \mm^{[q]}.$ Similarly, $b \in \mm^{[q]}.$ This proves that $G(\mm^{[q]})$ is Cohen-Macaulay for all large $q.$ 
	
	Note that $e_0(\mm) = 3$ and $e_{HK}(\mm) = 3$ (see \cite[Theorem 3.1]{conca}). In order to compute $e_{HK}(\mm^2)$, we compute $\ell(K[x,y,z]/(x^3-y^3z,x^{2q},y^{2q},z^{2q},x^qy^q,x^qz^q,y^qz^q)).$ It is easy to check that $\{ x^3-y^3z, y^{2q}, y^qz^q, z^{2q} \}$ is a Gr\"obner basis of the ideal $(x^3-y^3z)+\mm^{2[q]}$ (This involves assigning different weights to the variables, $\deg x = 2,$ $\deg y = 1,$ $\deg z = 3$, in order to make the polynomial $x^3-y^3z$ homogeneous). Hence, the Hilbert-Kunz function 
	\[ \ell\left(\frac{K[[x,y,z]]}{(x^3-y^3z)+\mm^{2[q]}}\right) = \ell\left(\frac{K[x,y,z]}{(x^3,y^{2q},y^qz^q,z^{2q})}\right) = 9q^2. \]
	Therefore, $e_{HK}(\mm^2) = 9.$ Using Corollary \ref{dim2,r}, we obtain that for all $k \geq 1,$
	\begin{align*}
		e_{HK}(\mm^k) = e_0(\mm) \left[\binom{k+1}{2} - 2k + 1 \right] - (k-2) \ e_{HK}(\mm) + (k-1) \ e_{HK}(\mm^2).
	\end{align*}
}\end{Example}

\section{Face rings of simplicial complexes}

 Let $\Delta$ be a $(d-1)$-dimensional simplicial complex on $r$ vertices. Let $K$ be a field and $R = K[\Delta]$ be the face ring of simplicial complex $\Delta.$ Then $R \simeq S/I$ is a $d$-dimensional ring, where $S=k[x_1,\ldots,x_r]$ and $I = I_{\Delta}$ is the Stanley-Reisner ideal. The ideal $I$ can be viewed as intersection of face ideals:
\[ I = \bigcap_{\substack{F \in \Delta \\ F \text{ is a facet}}} \pp_{F},  \]
where $\pp_F = (x_i \mid i \notin F).$ Let $\nn = \mm/I$ denote the maximal homogeneous ideal of $R$, where $\mm$ denotes the maximal homogeneous ideal of $S.$ Set $J = (x_i^{v_i} \mid v_i > 0 \text{ for all } 1 \leq i \leq r).$ We compute the generalized Hilbert-Kunz function and Hilbert-Kunz multiplicity of  powers of $J$ and in particular, of $\nn.$ We prove a few lemmas first. Note that there is no restriction on the characteristic of the field $K$ in this section. 

%

\begin{Lemma}  \label{decompose}
	Let $S = K[x_1,\ldots,x_r]$ be a polynomial ring in $r$ variables over a field $K$ and let $J = (x_1^{v_1},\ldots,x_r^{v_r})$ be an ideal of $S$ such that $v_i >0$ for all $i.$ Let $\pp_1, \ldots, \pp_\alpha$, for $\alpha \geq 2$, be distinct $S$-ideals generated by subsets of $\{x_1,\ldots,x_r\}.$ Let $I = \cap_{i=1}^{\alpha} \ \pp_i.$ Then for $q,n \in \NN$,
	\begin{align} 
		\ell\left(\frac{S}{I+(J^{[q]})^n}\right) 
		&= \sum_{i=1}^{\alpha} \ell\left(\frac{S}{\pp_i+(J^{[q]})^n}\right) - \sum_{1 \leq i<j \leq \alpha} \ell\left(\frac{S}{\pp_i+\pp_j+(J^{[q]})^n}\right)+ \cdots \nonumber\\
		&\hspace{7cm}+(-1)^{\alpha-1} \ell\left(\frac{S}{\sum_{i=1}^{\alpha} \pp_i+(J^{[q]})^n}\right).
	\end{align}
\end{Lemma}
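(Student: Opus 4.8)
The statement is an inclusion-exclusion formula for colengths, so the natural strategy is to reduce it to the inclusion-exclusion principle for lengths applied to a suitable short exact sequence, or better, to prove it by induction on $\alpha$. The key algebraic fact making everything work is that the ideals $\pp_i$ are generated by subsets of the variables and $J^{[q]}$ is generated by pure powers of the variables, so that for monomial ideals of this shape one has the clean identities $(\pp_i + A) \cap (\pp_j + A) = (\pp_i \cap \pp_j) + A = (\pp_i + \pp_j)$-type behavior is \emph{not} quite what we want; rather the crucial point is the distributivity $(\pp_i + A)\cap(\pp_j+A) = (\pp_i\cap\pp_j)+A$ when $A = (J^{[q]})^n$ and, more importantly, that sums of the $\pp_i$ are again ideals generated by subsets of the variables. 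So first I would record the lemma that for $A$ a monomial ideal generated by pure powers $x_t^{w_t}$ and $\pp, \pp'$ generated by subsets of variables, $(\pp + A)\cap(\pp' + A) = (\pp\cap\pp') + A$; this is a straightforward check on monomial generators (a monomial lies in the left side iff it is divisible by some $x_t^{w_t}$ or lies in both $\pp$ and $\pp'$).

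\medskip

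With that in hand, the plan is to induct on $\alpha$. For $\alpha = 2$, set $A = (J^{[q]})^n$ and consider the exact sequence
\[
0 \longrightarrow \frac{S}{(\pp_1\cap\pp_2)+A} \longrightarrow \frac{S}{\pp_1+A}\oplus\frac{S}{\pp_2+A} \longrightarrow \frac{S}{\pp_1+\pp_2+A} \longrightarrow 0,
\]
where the first map is the diagonal and the second is the difference; left-exactness uses $(\pp_1+A)\cap(\pp_2+A) = (\pp_1\cap\pp_2)+A$ from the preliminary lemma, and right-exactness (surjectivity) is clear. Since $I = \pp_1\cap\pp_2$ here, taking lengths gives the base case. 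For the inductive step, write $I = \pp_1\cap\cdots\cap\pp_\alpha = (\pp_1\cap\cdots\cap\pp_{\alpha-1})\cap\pp_\alpha =: Q\cap\pp_\alpha$ and apply the $\alpha=2$ exact sequence with $\pp_1$ replaced by $Q$ and $\pp_2$ by $\pp_\alpha$:
\[
\ell\!\left(\frac{S}{I+A}\right) = \ell\!\left(\frac{S}{Q+A}\right) + \ell\!\left(\frac{S}{\pp_\alpha+A}\right) - \ell\!\left(\frac{S}{Q+\pp_\alpha+A}\right).
\]
Then apply the induction hypothesis to $\ell(S/(Q+A))$ (that is, to the $\alpha-1$ primes $\pp_1,\ldots,\pp_{\alpha-1}$) and, separately, to $\ell(S/(Q+\pp_\alpha+A))$, which after absorbing $\pp_\alpha$ into $A$ is the inclusion-exclusion expansion for $\ell(S/(Q + (\pp_\alpha + A)))$ over $\pp_1,\ldots,\pp_{\alpha-1}$. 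Collecting terms and re-indexing, every subset $T\subseteq\{1,\ldots,\alpha\}$ appears with the correct sign $(-1)^{|T|-1}$: subsets not containing $\alpha$ come from the first expansion, subsets containing $\alpha$ come (with a sign flip from the leading minus) from the second.

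\medskip

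The only genuine subtlety — and the step I would be most careful about — is the bookkeeping of signs and the verification that $Q + \pp_\alpha$, $Q \cap \pp_\alpha$, and all the partial sums $\sum_{i\in T}\pp_i$ are still ideals generated by subsets of the variables, so that the preliminary intersection lemma keeps applying at each stage of the induction; this is where the hypothesis that each $\pp_i$ is generated by a subset of $\{x_1,\ldots,x_r\}$ is used essentially (it guarantees the family is closed under the operations performed). Everything else is routine: the exact sequence is the standard Mayer–Vietoris-type sequence for $S/(\aa\cap\bb)$, and additivity of length on short exact sequences of finite-length modules does the rest. I would also note the finiteness of all the lengths involved — since $J^{[q]}$ is $\mm$-primary in $S$, so is $\pp_i + (J^{[q]})^n$ and every larger ideal in sight, hence all quotients are Artinian.
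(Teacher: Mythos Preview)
Your argument is correct in spirit and is the standard Mayer--Vietoris/inclusion--exclusion proof for monomial ideals; the paper itself gives no proof but simply cites \cite[Corollary~3.3]{bgv}, and your inductive scheme is almost certainly what that reference does as well.

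Two small points worth tightening. First, the ideal $A=(J^{[q]})^n$ is \emph{not} generated by pure powers once $n>1$, so your phrasing of the preliminary lemma is off; what you actually need (and what your monomial check proves) is that for \emph{any} monomial ideal $A$ and any monomial ideals $\aa,\bb$ one has $(\aa+A)\cap(\bb+A)=(\aa\cap\bb)+A$, since a monomial lies in a sum of monomial ideals iff it lies in one of them. Second, in the inductive step you worry about $Q=\pp_1\cap\cdots\cap\pp_{\alpha-1}$ being generated by a subset of the variables --- it is not (e.g.\ $(x_1)\cap(x_2)=(x_1x_2)$) --- but this does not matter: the intersection identity above holds for arbitrary monomial ideals, so the exact sequence for $Q$ and $\pp_\alpha$ is fine. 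For the term $\ell(S/(Q+\pp_\alpha+A))$ your trick of absorbing $\pp_\alpha$ into $A$ works but forces you to strengthen the induction hypothesis to arbitrary monomial $A$; a cleaner alternative is to use the distributivity $Q+\pp_\alpha=\bigcap_{i=1}^{\alpha-1}(\pp_i+\pp_\alpha)$ (immediate monomial check), which puts you back in the exact setup of the lemma with $\alpha-1$ ideals each generated by a subset of the variables.
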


\begin{proof}
	The result follows using the same arguments as in the proof of \cite[Corollary 3.3]{bgv}.
\end{proof}

%

\begin{Lemma}  \label{polypower}
	Let $S = K[x_1,\ldots,x_r]$ be a polynomial ring in $r$ variables over a field $K$ and let $J = (x_1^{v_1},\ldots,x_r^{v_r})$ where $v_i >0$ for all $i.$ Then for all $q,k \in \NN,$
	\[ \ell \left(\frac{S}{(J^{[q]})^k}\right) =q^r \binom{k+r-1}{r}  \prod_{i=1}^{r} v_i. \]
\end{Lemma}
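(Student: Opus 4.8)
The plan is to compute the colength of $(J^{[q]})^k$ in $S = K[x_1,\ldots,x_r]$ directly, since $(J^{[q]})^k$ is a monomial ideal and $S/(J^{[q]})^k$ has an explicit $K$-basis given by the monomials not lying in the ideal. First I would observe that $J^{[q]} = (x_1^{qv_1},\ldots,x_r^{qv_r})$, so after the substitution $y_i = x_i^{qv_i}$ — or more precisely by grouping monomials according to which multiples of the $qv_i$ they exceed — the problem reduces to understanding $(y_1,\ldots,y_r)^k$ in a polynomial ring and then accounting for the $q^r\prod v_i$ monomials in each "block" $\prod x_i^{a_i}$ with $0 \le a_i < qv_i$.

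**The key counting step.** The monomials forming a $K$-basis of $S/(J^{[q]})^k$ are exactly those $x_1^{a_1}\cdots x_r^{a_r}$ such that $\sum_{i=1}^r \lfloor a_i/(qv_i)\rfloor \le k-1$. Writing $a_i = qv_i b_i + c_i$ with $0 \le c_i < qv_i$, the condition becomes $\sum b_i \le k-1$, with the $c_i$ ranging freely over $qv_i$ values each. Hence
\[ \ell\!\left(\frac{S}{(J^{[q]})^k}\right) = \left(\prod_{i=1}^r qv_i\right)\cdot \#\left\{(b_1,\ldots,b_r)\in\NN^r : \sum_{i=1}^r b_i \le k-1\right\} = q^r\left(\prod_{i=1}^r v_i\right)\binom{k+r-1}{r}, \]
using the standard identity that the number of lattice points in the simplex $\sum b_i \le k-1$ is $\binom{(k-1)+r}{r} = \binom{k+r-1}{r}$. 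This gives the claimed formula.

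**Alternative via short exact sequences.** If one prefers to avoid the explicit basis description, an alternative is induction on $k$ using the exact sequence relating $S/(J^{[q]})^k$, $S/(J^{[q]})^{k-1}$, and $(J^{[q]})^{k-1}/(J^{[q]})^k$, where the last module is generated by the images of the generators of $(J^{[q]})^{k-1}$ and, because $x_1^{qv_1},\ldots,x_r^{qv_r}$ is a regular sequence in $S$, is isomorphic to a direct sum of copies of $S/J^{[q]}$ indexed by the monomials of degree $k-1$ in $r$ variables; there are $\binom{k-1+r-1}{r-1}$ of these, and $\ell(S/J^{[q]}) = q^r\prod v_i$ (the base case $k=1$, itself the colength of a complete-intersection monomial ideal). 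Summing $\binom{k-1+r-1}{r-1} = \binom{m+r-1}{r-1}$ over $m = 0,\ldots,k-1$ via the hockey-stick identity again yields $\binom{k+r-1}{r}$.

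**Main obstacle.** There is no serious obstacle here; the statement is essentially a bookkeeping exercise. The only point requiring a little care is justifying that $x_1^{qv_1},\ldots,x_r^{qv_r}$ form a regular sequence (immediate, as distinct pure powers of variables in a polynomial ring) so that the powers of $J^{[q]}$ behave like powers of a complete intersection — this is what makes both the direct monomial count and the short exact sequence argument go through cleanly, and it is also why one gets an honest polynomial in $k$ rather than merely an eventual polynomial.
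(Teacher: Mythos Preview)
Your proposal is correct. The paper's proof is a one-line invocation of the standard fact that for a parameter ideal in a Cohen-Macaulay ring the Hilbert--Samuel function equals $e_0\binom{k+d-1}{d}$ for all $k\ge 1$; since $J^{[q]}$ is generated by a regular sequence in $S$, one has $e_0(J^{[q]})=\ell(S/J^{[q]})=q^r\prod v_i$, and the formula follows. Your second approach is exactly this argument unpacked (regular sequence $\Rightarrow$ $\gr_{J^{[q]}}(S)$ is a polynomial ring over $S/J^{[q]}$ $\Rightarrow$ the length formula via hockey-stick), so it matches the paper. Your first approach, the direct monomial count via $a_i = qv_i b_i + c_i$, is a genuinely different and more elementary route: it bypasses the Cohen--Macaulay/regular-sequence machinery entirely and would work verbatim over any base ring in place of $K$, at the cost of being specific to monomial complete intersections rather than arbitrary parameter ideals.
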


\begin{proof}
	Since $S$ is a polynomial ring, $J$ and hence $J^{[q]}$ is an ideal generated by a homogeneous system of parameters in $S.$ Using the Cohen-Macaulay property of $S$, it follows that for all $q,k \in \NN,$
	\[ \ell \left(\frac{S}{(J^{[q]})^k}\right) =q^r \binom{k+r-1}{r}  \prod_{i=1}^{r} v_i. \]
	
\end{proof}

%

\begin{Theorem}  \label{polydelta}
	Let $R$ be a $d$-dimensional face ring of a simplicial complex. Let $J = (x_1^{v_1},\ldots,x_r^{v_r})R$ where $v_i >0$ for all $i.$ Then the generalized Hilbert-Kunz function $\ell(R/(J^{[q]})^k)$ is a polynomial for all $q,k \in \NN.$
\end{Theorem}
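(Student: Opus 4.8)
The plan is to pull the computation back to the polynomial ring $S = K[x_1,\ldots,x_r]$ and then apply Lemmas \ref{decompose} and \ref{polypower}. Writing $R = S/I_\Delta$ and lifting $J$ to the ideal $J_S = (x_1^{v_1},\ldots,x_r^{v_r})$ of $S$, one has $\ell\!\left(R/(J^{[q]})^k\right) = \ell\!\left(S/(I_\Delta + (J_S^{[q]})^k)\right)$ for all $q,k \in \NN$. Recall the Stanley--Reisner decomposition $I_\Delta = \bigcap_F \pp_F$, with $F$ running over the facets of $\Delta$ and $\pp_F = (x_i \mid i \notin F)$; since distinct facets give distinct primes, these are distinct $S$-ideals generated by subsets of the variables. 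If $\Delta$ has a single facet, then $I_\Delta$ is itself one such prime, $R$ is a polynomial ring, and Lemma \ref{polypower} (after discarding the generators of $J$ that become zero) immediately presents $\ell(R/(J^{[q]})^k)$ as a polynomial in $k$. Otherwise $\Delta$ has $\alpha \geq 2$ facets and Lemma \ref{decompose} expresses $\ell\!\left(S/(I_\Delta + (J_S^{[q]})^k)\right)$ as a fixed alternating sum of terms of the form $\ell\!\left(S/(\pp_{F_{i_1}} + \cdots + \pp_{F_{i_s}} + (J_S^{[q]})^k)\right)$.

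The crux is that each ideal $\pp_{F_{i_1}} + \cdots + \pp_{F_{i_s}}$ equals $\pp_G := (x_i \mid i \notin G)$ with $G = F_{i_1} \cap \cdots \cap F_{i_s}$, again generated by a subset of the variables. Because $J_S^{[q]} = (x_1^{v_1 q},\ldots,x_r^{v_r q})$ is generated by pure powers of the variables, killing the variables outside $G$ turns $J_S^{[q]}$ into the analogous ideal $(x_i^{v_i q} \mid i \in G)$ of the polynomial subring $K[x_i \mid i \in G]$, so that
\[
\frac{S}{\pp_G + (J_S^{[q]})^k} \;\cong\; \frac{K[x_i \mid i \in G]}{\big(x_i^{v_i q} \mid i \in G\big)^k}.
\]
Lemma \ref{polypower}, applied to this polynomial ring in $|G|$ variables, shows that the length of the right-hand side is $q^{|G|}\binom{k+|G|-1}{|G|}\prod_{i \in G} v_i$, a polynomial in $k$ valid for \emph{every} $k \in \NN$; when $G = \emptyset$ the quotient is $K$, of length $1$, consistent with the conventions $\binom{k-1}{0} = 1$ and an empty product equal to $1$. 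Since for fixed $q$ the function $\ell(R/(J^{[q]})^k)$ is a fixed $\ZZ$-linear combination of finitely many such polynomials in $k$, it is a polynomial in $k$; as this holds for every $q \in \NN$, the theorem follows.

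There is no substantial obstacle: the argument is bookkeeping layered on top of the two preceding lemmas. The step that warrants care is the displayed isomorphism — the observation that, since $J_S^{[q]}$ is generated by pure powers of variables, reduction modulo a monomial prime $\pp_G$ produces precisely the corresponding pure-power ideal in the smaller polynomial ring — together with the degenerate cases ($\Delta$ a simplex, and $G = \emptyset$), which must be checked so that Lemma \ref{polypower} can be invoked uniformly. It is also worth emphasizing why the conclusion holds for all $k$, not merely for $k \gg 0$: each reduced term is governed by a parameter ideal in a Cohen--Macaulay (here polynomial) ring, whose Hilbert--Samuel function agrees with its Hilbert--Samuel polynomial from $k = 1$ onward, and inclusion--exclusion transports this exactness to $R$.
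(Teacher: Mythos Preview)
Your proof is correct and follows essentially the same route as the paper: lift to $S$, apply the inclusion--exclusion of Lemma \ref{decompose}, and then invoke Lemma \ref{polypower} on each summand. Your write-up is in fact more careful than the paper's, explicitly handling the single-facet case (needed since Lemma \ref{decompose} assumes $\alpha \geq 2$) and spelling out why each quotient $S/(\pp_{F_{i_1}}+\cdots+\pp_{F_{i_s}}+(J_S^{[q]})^k)$ is again of the form covered by Lemma \ref{polypower}.
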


\begin{proof}
	Write $R = S/I$, where $S = K[x_1,\ldots,x_r]$ and $I = \cap_{i=1}^{\alpha} \pp_i.$ Then for all $q,k \in \NN,$
	\begin{align}  \label{sumpoly}
		\ell\left(\frac{R}{(J^{[q]})^k}\right) 
		&= \ell\left(\frac{S}{I + (J^{[q]})^k}\right) \nonumber\\
		&= \sum_{i=1}^{\alpha} \ell\left(\frac{S}{\pp_i+(J^{[q]})^k}\right) - \sum_{1 \leq i<j \leq \alpha} \ell\left(\frac{S}{\pp_i+\pp_j+(J^{[q]})^k}\right)+ \cdots + (-1)^{\alpha-1} \ell\left(\frac{S}{\sum_{i=1}^{\alpha} \pp_i+(J^{[q]})^k}\right),
	\end{align}
	using Lemma \ref{decompose}. The result follows using Lemma \ref{polypower}.
\end{proof}

From Theorem \ref{polydelta}, it follows that for all $q \geq 1,$ the Hilbert-Samuel function of the ideal $J^{[q]}$ coincides its Hilbert-Samuel polynomial for all $k \geq 1.$ In particular, we can write, for all $q, k \geq 1,$
\begin{align}  \label{hilbertPoly}
	\ell \left(\frac{R}{(J^{[q]})^k}\right) = \sum_{i=0}^{d} (-1)^i e_i(J^{[q]}) \binom{k+d-1-i}{d-i}.
\end{align}

%
%

\begin{Theorem}  \label{strings}
	Let $R$ be a $d$-dimensional face ring of a simplicial complex and let $J = (x_1^{v_1},\ldots,x_r^{v_r})$ be an ideal of $R$ such that $v_i >0$ for all $i.$ Then $\lim\limits_{q \rightarrow \infty} e_i(J^{[q]})/q^d =0$ for all $i=1,\ldots,d$ and for all $k \geq 1,$ the generalized Hilbert-Kunz multiplicity
	\[ e_{HK}(J^k) = e_0(J) \binom{k+d-1}{d}. \]
\end{Theorem}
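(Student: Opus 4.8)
The plan is to combine the polynomiality result of Theorem \ref{polydelta} with an explicit computation of the leading terms on the prime-by-prime pieces appearing in \eqref{sumpoly}. By Theorem \ref{polydelta} and \eqref{hilbertPoly}, for each fixed $q$ the function $k \mapsto \ell(R/(J^{[q]})^k)$ is a polynomial in $k$ of degree $d$, so it suffices to understand, for each $k$, the leading behavior in $q$ of $\ell(R/(J^{[q]})^k)$. First I would apply Lemma \ref{decompose} to write $\ell(R/(J^{[q]})^k)$ as the inclusion–exclusion sum over the face primes $\pp_{F_1}, \ldots, \pp_{F_\alpha}$ and their sums. Each summand has the form $\ell(S/(\pp + (J^{[q]})^k))$ where $\pp$ is generated by some subset of the variables, say $\pp = (x_j : j \notin T)$ for $T \subseteq \{1,\ldots,r\}$. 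Quotienting out $\pp$ kills the corresponding generators of $J^{[q]}$, so $S/(\pp + (J^{[q]})^k) \cong K[x_j : j \in T]/(x_j^{qv_j} : j \in T)^k$, and by Lemma \ref{polypower} this length equals $q^{|T|}\binom{k+|T|-1}{|T|}\prod_{j \in T} v_j$.

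Next I would identify which subsets $T$ contribute at the top degree $d$ in $q$. A sum of face primes $\pp_{F_{i_1}} + \cdots + \pp_{F_{i_s}}$ equals $(x_j : j \notin F_{i_1} \cap \cdots \cap F_{i_s})$, so the relevant $T$ are exactly the intersections of collections of facets, i.e. the faces of $\Delta$; and $|T| = d$ forces $T$ to be a facet of dimension $d-1$. Thus the coefficient of $q^d$ in $\ell(R/(J^{[q]})^k)$, after inclusion–exclusion collapses, is $\binom{k+d-1}{d}\sum_{F \text{ facet}, \dim F = d-1} \prod_{j \in F} v_j$ — the same combinatorial sum that appears for $k=1$. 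On the other hand, $e_{HK}(J) = \lim_q \ell(R/J^{[q]})/q^d$, and the same argument with $k=1$ gives $e_{HK}(J) = \sum_{F} \prod_{j\in F} v_j$ over top-dimensional facets. Moreover the Hilbert–Samuel multiplicity $e_0(J)$ of $R$ is computed on the pieces of lowest codimension in the associativity formula and equals exactly this same sum (the associativity/additivity formula for $e_0$ over the minimal primes $\pp_F$ of $R$, each giving $\prod_{j\in F} v_j$). Hence the $q^d$-coefficient of $\ell(R/(J^{[q]})^k)$ is $e_0(J)\binom{k+d-1}{d}$.

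Finally, comparing this with \eqref{hilbertPoly}: dividing $\ell(R/(J^{[q]})^k) = \sum_{i=0}^d (-1)^i e_i(J^{[q]})\binom{k+d-1-i}{d-i}$ by $q^d$ and letting $q \to \infty$ gives, for each fixed $k$,
\[
e_{HK}(J^k) = \sum_{i=0}^d (-1)^i L_i(J)\binom{k+d-1-i}{d-i}
\]
where $L_i(J) = \lim_q e_i(J^{[q]})/q^d$ (the limits exist because each $e_i(J^{[q]})/q^d$ is, by the explicit inclusion–exclusion formula, a fixed polynomial combination of the $q^{|T|}/q^d$ with $|T| \le d$, hence convergent). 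Since we have shown the left side equals $e_0(J)\binom{k+d-1}{d}$ for every $k \ge 1$, and the polynomials $\binom{k+d-1-i}{d-i}$ in $k$ are linearly independent, matching coefficients forces $L_0(J) = e_0(J)$ and $L_i(J) = 0$ for $i = 1,\ldots,d$, which is exactly the claim.

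I expect the main obstacle to be the bookkeeping in the second paragraph: verifying carefully that the inclusion–exclusion sum over faces, when restricted to the $q^d$-term, collapses to a single clean sum over top-dimensional facets, and that this agrees with $e_0(J)$. This requires knowing that $\Delta$ is pure of dimension $d-1$ is \emph{not} assumed, so lower-dimensional facets genuinely occur but contribute only to lower powers of $q$; one must also handle the Möbius cancellation among non-maximal faces of dimension $d$ — but since only facets themselves have a set of $d$ vertices that is a face, no cancellation occurs at the top, and the argument goes through. A secondary point is justifying the interchange of the limit in $q$ with the (finite) sum and with reading off polynomial coefficients, which is immediate once everything is written as explicit polynomials in $q$.
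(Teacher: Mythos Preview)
Your proposal is correct and uses the same ingredients as the paper (Lemma~\ref{decompose}, Lemma~\ref{polypower}, and the identity~\eqref{hilbertPoly}), but the paper's argument is shorter and avoids the detour through the associativity formula. The paper simply notes that each summand in~\eqref{sumpoly} is, by Lemma~\ref{polypower}, of the form $c_T\,q^{|T|}\binom{k+|T|-1}{|T|}$; since these binomials are already the basis used in~\eqref{hilbertPoly}, comparing coefficients directly gives $(-1)^i e_i(J^{[q]}) = \beta_i q^{d-i}$ for constants $\beta_i$ independent of $q$. Hence $L_i(J)=0$ for $i\ge 1$ is immediate, and part~(ii) follows at once by dividing~\eqref{hilbertPoly} by $q^d$ and passing to the limit. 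In particular there is no need to identify the top coefficient as $\sum_F\prod_{j\in F}v_j$ or to invoke the associativity formula for $e_0(J)$: the equality $L_0(J)=e_0(J)$ drops out by specializing $e_0(J^{[q]})=\beta_0 q^d$ to $q=1$. Your route of first computing $e_{HK}(J^k)$ and then deducing $L_i=0$ by linear independence of the binomials works, but runs the logic backwards relative to the paper and does more bookkeeping than necessary.
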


\begin{proof}  
	(i) Comparing the equations \eqref{hilbertPoly} and \eqref{sumpoly} and using Lemma \ref{polypower}, it follows that for all $i=1,\ldots,d,$ the coefficients $e_i(J^{[q]})$ are of the form $\beta_i q^{d-i}$, for some $\beta_i \in \ZZ.$ In particular, for all $i=1,\ldots,d$, 
	\[ \lim\limits_{q \rightarrow \infty} \frac{e_i(J^{[q]})}{q^d} = \lim\limits_{q \rightarrow \infty} \frac{\beta_i q^{d-i}}{q^d} = 0. \] 
	
	(ii) In \eqref{hilbertPoly}, divide by $q^d$ and take limit $q \rightarrow \infty.$ Using part (i) it follows that for all $k \geq 1,$
	\[ e_{HK}(J^k) = \sum_{i=0}^{d} (-1)^i \binom{k+d-1-i}{d-i} \ \lim\limits_{q \rightarrow \infty} \frac{e_i(J^{[q]})}{q^d} = e_0(J) \binom{k+d-1}{d}. \]
\end{proof}

This answers the questions (\cite[Question 1.2, Question 1.3]{ilya}) posed by I. Smirnov for ideals, primary for the maximal ideal, generated by pure powers of the variables and in particular, for the maximal homogeneous ideal of face rings of simplicial complexes. 

Let $J = (x_1^{v_1},\ldots,x_r^{v_r})$ be an ideal of $R=K[\Delta]$ such that $v_i >0$ for all $i$ and let $\nn$ denote the maximal homogeneous ideal of $R.$ From Lemma \ref{polypower} and Theorem \ref{polydelta} it follows that for all $q,k \geq 1,$
\[ \ell\left(\frac{R}{(J^{[q]})^k}\right) = \ell\left(\frac{R}{(\nn^{[q]})^k}\right) \prod_{i=1}^r v_i. \]
Therefore, we only compute the generalized Hilbert-Kunz function of $\nn^k$ in the following  examples. We use the following result to check the stability of an ideal.

\begin{Theorem}[{\cite[Theorem 2.1]{huneke1987}, \cite[Theorem 3.3]{ooishiCoeff}}]  \label{H-O}
	For any $\mm$-primary ideal $I$ of a Cohen-Macaulay local ring $A$, $I$ is a stable ideal if and only if $e_1(I) = e_0(I) - \ell(A/I).$
\end{Theorem}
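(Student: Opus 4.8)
The plan is to derive the statement from Northcott's inequality together with a precise description of its equality case, reducing everything to dimension one by cutting with a superficial element.

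First I would reduce to $d = \dim A = 1$. If $d \ge 2$, pick $a_1 \in I$ superficial for $I$; since $A$ is Cohen--Macaulay of positive dimension, $a_1$ is a nonzerodivisor, $\bar A := A/(a_1)$ is Cohen--Macaulay of dimension $d-1$, the Hilbert coefficients satisfy $e_i(I) = e_i(\bar I)$ for $0 \le i \le d-1$ (where $\bar I = I\bar A$), and $\ell(\bar A/\bar I) = \ell\big(A/(I+(a_1))\big) = \ell(A/I)$ because $a_1 \in I$. Iterating, the numerical equality in the theorem for $I$ in $A$ is equivalent to the one for $\bar I$ in $\bar A$. Moreover ``$I$ stable $\Rightarrow$ $\bar I$ stable'' is immediate: if $J = (a_1,\dots,a_d)$ is a minimal reduction of $I$ with $JI = I^2$, then $\bar J = (\bar a_2,\dots,\bar a_d)$ is a minimal reduction of $\bar I$ and $\bar J\bar I = \bar I^2$. (We may also assume the residue field infinite after the standard faithfully flat base change, which affects none of these quantities.)

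Next I would settle the case $d = 1$ by a direct length computation. Let $J = (a)$ be a minimal reduction of $I$, so $a$ is a nonzerodivisor and $\ell(A/aA) = e_0(aA) = e_0(I)$. From the exact sequence $0 \to I^n/aI^{n-1} \to A/aI^{n-1} \to A/I^n \to 0$ and $\ell(A/aI^{n-1}) = \ell(A/aA) + \ell(aA/aI^{n-1}) = e_0(I) + \ell(A/I^{n-1})$, one gets $\ell(A/I^n) - \ell(A/I^{n-1}) = e_0(I) - \ell(I^n/aI^{n-1})$; summing and using $\ell(I^n/aI^{n-1}) = 0$ for $n \gg 0$ yields, for $N \gg 0$,
\[ e_1(I) = e_0(I)N - \ell(A/I^N) = \sum_{n \ge 1}\ell\!\left(\frac{I^n}{aI^{n-1}}\right) \ \ge\ \ell(I/aA) = \ell(A/aA) - \ell(A/I) = e_0(I) - \ell(A/I), \]
with equality precisely when $\ell(I^n/aI^{n-1}) = 0$ for all $n \ge 2$, i.e. $I^2 = aI$, i.e. $r_J(I) \le 1$, i.e. $I$ is stable.

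Finally, the passage back up. For the direction ``$I$ stable $\Rightarrow$ equality'' in arbitrary $d$, $r_J(I) \le 1$ gives $J \cap I^n = JI^{n-1}$ for all $n$ by the Valabrega--Valla criterion, hence $\depth G(I) \ge d$, and Theorem~\ref{hm}(3) then gives $e_1(I) = \ell(I/J) = e_0(I) - \ell(A/I)$. For the converse in arbitrary $d$ one must lift stability through the superficial element: knowing $\bar I$ stable and $a_1$ superficial, show $I$ stable. I expect this to be the main obstacle, since $a_1$ superficial only guarantees $(a_1) \cap I^{n+1} = a_1 I^n$ for $n \gg 0$, whereas one needs it for $n = 1$; the standard remedy is to exploit the hypothesis $e_1(I) = e_0(I) - \ell(A/I)$ globally to force $\depth G(I) \ge d$ (equivalently, that $G(I)$ is Cohen--Macaulay), after which Theorem~\ref{hm}(3) forces $\ell(I^n/JI^{n-1}) = 0$ for $n \ge 2$ and hence $r_J(I) \le 1$. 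Establishing that depth bound from the numerical equality is the real content, and is where I would concentrate the effort.
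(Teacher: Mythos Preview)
The paper does not give its own proof of this theorem: it is stated with attribution to \cite{huneke1987} and \cite{ooishiCoeff} and then used as a tool (in Example~\ref{pathSC} and in Section~4). There is therefore nothing in the paper to compare your argument against.

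As for your sketch itself: the reduction to dimension one and the dimension-one computation are correct and standard, and they already give Northcott's inequality $e_1(I) \ge e_0(I) - \ell(A/I)$ together with the implication ``$I$ stable $\Rightarrow$ equality'' in all dimensions. But your proposal is not a complete proof of the converse in dimension $\ge 2$: you explicitly leave open the step ``numerical equality $\Rightarrow$ $\depth G(I) = d$'' (equivalently, ``$\bar I$ stable $\Rightarrow$ $I$ stable'' for $a_1$ superficial), calling it ``the real content, and \dots where I would concentrate the effort.'' That is precisely the substance of the Huneke--Ooishi theorem, so as written your proposal is a correct outline with the key lemma missing rather than a proof. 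One standard way to close the gap is Sally's technique: from $\bar I^2 = \bar J\,\bar I$ one gets $I^2 = JI + a_1 A \cap I^2$, and then one shows $a_1 A \cap I^2 = a_1 I$ by comparing lengths using the already-established equality $e_1(I) = e_0(I) - \ell(A/I)$ (this is essentially what Huneke does); alternatively one invokes the Huckaba--Marley inequality $e_1(I) \ge \sum_{n\ge 1}\ell(I^n/JI^{n-1})$ valid without any depth hypothesis, whose equality case forces $G(I)$ Cohen--Macaulay.
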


\begin{Example}  \label{pathSC}
	{\rm
	Let $\Delta$ be the $1$-dimensional simplicial complex on $r$ vertices, for some $r \geq 3:$
	\medskip
	\begin{center}
		\begin{tikzpicture}
			\draw (0,0) -- (2.5,0);
			\filldraw (0,0) circle (2pt) node[anchor=north]{$x_1$};
			\filldraw (1,0) circle (2pt) node[anchor=north]{$x_2$};
			\filldraw (2,0) circle (2pt) node[anchor=north]{$x_3$};
			\draw [dashed] (3,0) -- (4,0);
			\draw (4.5,0) -- (6,0);
			\filldraw (5,0) circle (2pt) node[anchor=north]{$x_{r-1}$};
			\filldraw (6,0) circle (2pt) node[anchor=north]{$x_r$};
		\end{tikzpicture}
	\end{center}
	
	For $i=1,\ldots,r-1$, set $\pp_i = \big( \{x_1,\ldots,x_r\} \setminus \{x_i, x_{i+1}\} \big).$ Let $I = \cap_{i=1}^{r-1} \ \pp_i.$ Then $R=K[x_1,\ldots,x_r]/I$ is the face ring of $\Delta.$ As $\Delta$ is a shellable simplicial complex, it follows that $R$ is a two-dimensional Cohen-Macaulay ring with $f$-vector $f(\Delta) = (1,r,r-1)$ (see \cite[Definition 5.1.11, Theorem 5.1.13]{brunsHerzog}). Let $\nn$ be the maximal homogeneous ideal of $R$. From Theorem \ref{polydelta} it follows that for all $q, k \geq 1$,
	\begin{align*}
		\ell\left(\frac{R}{\nn^{[q]k}}\right) 
		=& \ \ell\left(\frac{S}{I+\mm^{[q]k}}\right)  \\
		=& \ \sum_{i=1}^{r-1} \ell\left(\frac{S}{\pp_i+\mm^{[q]k}}\right) - \sum_{1 \leq i < j \leq r-1} \ell\left(\frac{S}{\pp_i + \pp_j + \mm^{[q]k}}\right) + \cdots + (-1)^{r-2} \ \ell\left(\frac{S}{\sum_{i=1}^{r-1}\pp_i+\mm^{[q]k}}\right).
	\end{align*}
	For $1 \leq i < j \leq r-1$, if $\{x_i,x_{i+1}\} \cap \{x_{j}, x_{j+1}\} \neq \emptyset$, then $S/(\pp_i+\pp_j) \simeq K{[x]}$, i.e. it is a polynomial ring over $K$ in one variable and there are $r-2$ such instances. Otherwise, $S/(\pp_i+\pp_j) \simeq K.$ Therefore, using Lemma \ref{polypower}, we get
	\[ \sum_{1 \leq i < j \leq r-1} \ell\left(\frac{S}{\pp_i + \pp_j + \mm^{[q]k}}\right) = (r-2)kq - \left[ \binom{r-1}{2} - (r-2) \right]. \]
	It is also easy to observe that $S/(\pp_{i_1}+\cdots+\pp_{i_u}) \simeq K,$ for all $u \geq 3$ and $i_1,\ldots,i_u \in \{1,\ldots,r-1 \}.$ Hence, for $q,k \geq 1,$
	\begin{align}  \label{pathSCeq}
		\ell\left(\frac{R}{\nn^{[q]k}}\right) 
		=& \ (r-1) \binom{k+1}{2} q^2 - (r-2)kq - \left[ \binom{r-1}{2} - (r-2) \right] + \binom{r-1}{3} + \cdots + (-1)^{r-2}   \nonumber\\
		=& \ (r-1) \binom{k+1}{2} q^2 - (r-2)kq,
	\end{align}
	where the last equality follows as for any $n \in \NN,$ $\sum_{i=2}^{n} (-1)^i \binom{n}{i} = n-1.$
	This implies that 
	\[ e_0(\nn^{[q]}) = (r-1)q^2, \ e_1(\nn^{[q]}) = (r-2)q, \ e_2(\nn^{[q]}) = 0 \text{ and } e_{HK}(\nn^k) = (r-1)\binom{k+1}{2}.\] 
	Since $e_1(\nn^{[q]}) = e_0(\nn^{[q]}) - \ell(R/\nn^{[q]})$, from Theorem \ref{H-O} it follows that $r(\nn^{[q]}) = 1$ and hence $G(\nn^{[q]})$ is Cohen-Macaulay for all $q.$
}\end{Example}

\begin{Example}  \label{cycleSC}
	{\rm
	Let $\Delta$ be the the simplicial complex related to an $n$-cycle and $S=K [x_1, \ldots, x_n]$. Let $I=I_{\Delta}, R=S/I$. Let $\nn$ be the maximal homogeneous ideal of $R$ and $\mm=(x_1, \ldots, x_n)S$. 
	\begin{center}
		\begin{tikzpicture}
			\draw (0,0) -- (1,-1);
			\draw (1,-1) -- (1,-2);
			\draw (-1,-2) -- (1,-2);
			\draw[dashed] (-1,-1) -- (-1,-2);
			\draw (-1,-1) -- (0,0);
			\filldraw (0,0) circle (2pt) node[anchor=west]{$x_1$};
			\filldraw (1,-1) circle (2pt) node[anchor=west]{$x_2$};
			\filldraw (1,-2) circle (2pt) node[anchor=west]{$x_3$};
			\filldraw (-1,-2) circle (2pt) node[anchor=east]{$x_4$};
			\filldraw (-1,-1) circle (2pt) node[anchor=east]{$x_n$};	
		\end{tikzpicture}
	\end{center}

	For $i=1,\ldots,n-1$, set $\pp_i = \big( \{x_1,\ldots,x_n\} \setminus \{x_i, x_{i+1}\} \big)$ and $\pp_n = (x_2,\ldots,x_{n-1}).$ Then $I = \cap_{i=1}^{n} \ \pp_i.$ From Theorem \ref{polydelta} it follows that for all $q,k \geq 1,$
	\begin{align*}
		\ell\left(\frac{R}{\nn^{[q]k}}\right) 
		= \sum_{i=1}^{n} \ell\left(\frac{S}{\pp_i+\mm^{[q]k}}\right) - \sum_{1 \leq i < j \leq n} \ell\left(\frac{S}{\pp_i + \pp_j + \mm^{[q]k}}\right) + \cdots + (-1)^{n-1} \ \ell\left(\frac{S}{\sum_{i=1}^{n}\pp_i+\mm^{[q]k}}\right).
	\end{align*}
	Using the same arguments as in the above example, it follows that
	\begin{align}  \label{cycle}
		\ell\left(\frac{R}{\nn^{[q]k}}\right) 
		=& \ n \binom{k+1}{2} q^2 - nkq - \left[ \binom{n}{2} - n \right] + \binom{n}{3} + \cdots + (-1)^{n-1}   \nonumber\\
		=& \ n \binom{k+1}{2} q^2 - nkq +1.
	\end{align}
	This implies that for all $k \geq 1,$ 
	\[ e_0(\nn^{[q]}) = nq^2, \ e_1(\nn^{[q]}) = nq, \ e_2(\nn^{[q]}) = 1 \text{ and } e_{HK}(\nn^k) = n\binom{k+1}{2}.\] 
	We now prove that $\depth G(\nn^{[q]}) \geq 1$ for all $q.$ 
%
	We use the following result by M.E. Rossi and G. Valla.
	\begin{Theorem}[{\cite[Corollary 1.7]{rossi}}] \label{rossi-valla}
		Let $I$ be an $\mm$-primary ideal of a Cohen-Macaulay local ring $(R, \mm)$ of dimension $d$. If $e_0(I)= \ell(I/I^2) + (1-d) \ell(R/I)+1,$ then $\depth G(I) \geq d-1.$
	\end{Theorem}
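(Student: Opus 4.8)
The plan is to recast the numerical condition as a statement about a single colength, then strip off superficial elements and reduce to the genuinely hard case.

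\emph{Step 1 (reformulation).} First I would pass to $R[X]_{\mm R[X]}$, a faithfully flat extension changing none of $d$, $\ell(R/I)$, $\ell(I/I^2)$, $e_0(I)$ or $\depth G(I)$, so I may assume $R/\mm$ is infinite and fix a minimal reduction $J=(a_1,\dots,a_d)$ of $I$ generated by a superficial sequence. Since $R$ is Cohen--Macaulay and $J$ is a parameter ideal, $a_1,\dots,a_d$ is a regular sequence, so the only relations among the $a_i$ are the Koszul ones, which lie in $J\subseteq I$; hence $J/JI\cong(R/I)^d$ and $\ell(R/JI)=\ell(R/J)+\ell(J/JI)=e_0(I)+d\,\ell(R/I)$. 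Consequently
\[ \ell(I^2/JI)=\ell(R/JI)-\ell(R/I^2)=e_0(I)+(d-1)\ell(R/I)-\ell(I/I^2), \]
so the hypothesis is \emph{equivalent} to $\ell(I^2/JI)=1$ (and this colength is independent of the chosen minimal reduction). It therefore suffices to prove: $\ell(I^2/JI)=1\Rightarrow\depth G(I)\ge d-1$.

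\emph{Step 2 (dimension reduction).} If $d\le 1$ there is nothing to prove, so I would induct on $d\ge 2$. For a generic superficial $a_1$ put $\bar R=R/(a_1)$, $\bar I=I/(a_1)$, $\bar J=J/(a_1)$; then $\bar R$ is Cohen--Macaulay of dimension $d-1$, $\bar J$ is a minimal reduction of $\bar I$, and a short computation gives $\bar I^2/\bar J\bar I\cong I^2/\bigl(JI+((a_1)\cap I^2)\bigr)$, a quotient of $I^2/JI$, so $\ell(\bar I^2/\bar J\bar I)\le 1$. By induction (the value $0$ giving $G(\bar I)$ Cohen--Macaulay by the Valabrega--Valla criterion) we get $\depth G(\bar I)\ge d-2$. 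To lift this to $\depth G(I)\ge d-1$ one needs $a_1$ to be \emph{good}, i.e. $a_1^{*}\in G(I)_1$ a nonzerodivisor on $G(I)$, equivalently $(a_1)\cap I^{n+1}=a_1I^{n}$ for all $n$; then $G(I)/(a_1^{*})\cong G(\bar I)$ and $\depth G(I)=\depth G(\bar I)+1\ge d-1$.

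\emph{Step 3 (the core, and the obstacle).} The existence of a good superficial element is the crux — this is exactly why the statement is Sally's conjecture rather than a formality. If no superficial element were good then $\depth G(I)=0$, so the maximal graded ideal of $G(I)$ would be an associated prime. I would rule this out by pinning down the Hilbert function: $\ell(I^2/JI)=1$ means $I^2/JI\cong R/\mm$, hence $\mm I^2\subseteq JI$ and therefore $\mm I^{n+1}\subseteq JI^{n}$ for all $n\ge 1$. The plan is to exploit this, via the Sally module $S_J(I)=\bigoplus_{n\ge 1}I^{n+1}/J^{n}I$ over the Rees algebra $R[Jt]$, to show first that $r_J(I)\le 2$ — equivalently that $S_J(I)$ is generated in degree one — and then that $S_J(I)$ is Cohen--Macaulay of multiplicity one, which forces
\[ \ell(R/I^{n})=e_0(I)\binom{n+d-1}{d}-\bigl(e_0(I)-\ell(R/I)+1\bigr)\binom{n+d-2}{d-1}+\binom{n+d-3}{d-2}\qquad(n\ge 1); \]
comparing this with the Huckaba--Marley description of $\depth G(I)$ through $\Delta^{d}[P_I-H_I]$ (Theorem~\ref{hm} together with its converse from Marley's thesis) then yields $\depth G(I)\ge d-1$. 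I expect the main obstacle to be precisely the implication $\ell(I^2/JI)=1\Rightarrow r_J(I)\le 2$ and the accompanying control of $\ell(I^{n+1}/JI^{n})$ for $n\ge 3$: the hypothesis bounds only the first such colength, and it takes the Cohen--Macaulayness of $R$ together with a delicate induction (as carried out by Rossi--Valla, and independently by H.-J. Wang) to show the associated graded ring cannot degenerate to depth zero.
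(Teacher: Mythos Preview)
The paper does not prove this theorem at all: it is quoted verbatim as \cite[Corollary~1.7]{rossi} and used as a black box (in Example~\ref{cycleSC} and in the counter-example of Section~4) to certify $\depth G(\nn^{[q]})\ge 1$. There is therefore no ``paper's own proof'' to compare your attempt against.

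That said, your sketch is a faithful outline of the Rossi--Valla argument behind the cited result. Your Step~1 reformulation is correct and is exactly how the literature phrases the hypothesis: $e_0(I)=\ell(I/I^2)+(1-d)\ell(R/I)+1$ is equivalent to $\ell(I^2/JI)=1$ for any (hence every) minimal reduction $J$, and this is Sally's conjecture in its standard form. Your Steps~2--3 also track the actual proof: one passes to the Sally module $S_J(I)$, shows it has multiplicity one, and deduces the depth bound. One caveat: the implication $\ell(I^2/JI)=1\Rightarrow r_J(I)\le 2$ that you flag as the ``main obstacle'' is in fact \emph{false} in general---Rossi--Valla (and Wang) allow $r_J(I)$ to be arbitrary; what they prove instead is that $\ell(I^{n+1}/JI^n)\le 1$ for all $n\ge 1$ and that the Sally module is Cohen--Macaulay, from which the Hilbert function identity and the depth bound follow without controlling the reduction number. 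So your Step~3 plan needs that adjustment, but the overall architecture is right.
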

		
%
		
	From equation \eqref{cycle}, we get that for all $k \geq 1,$
	\[ \ell\left(\frac{(\nn^{[q]})^k}{{(\nn^{[q]})^{k+1}}}\right)
	= nq^2{{k+2}\choose 2}-nq(k+1)+1 -nq^2{{k+1}\choose 2}+nqk-1 =nq^2(k+1)-nq. \]
	
	Hence 
	\[ \ell(\nn^{[q]}/(\nn^{[q]})^2)+ (1-d)\ell(R/\nn^{[q]}) + 1 = 2nq^2-nq-(nq^2-nq+1)+1 = nq^2 = e_0(\nn^{[q]}).\] 
	Using Theorem \ref{rossi-valla}, we get depth $G(\nn^{[q]}) \geq 1$ for all $q.$ 
%
}\end{Example}

Note that Theorem \ref{polydelta} and Theorem \ref{strings} also hold for edge ideals of graphs. 
\begin{Definition}
	Let $G$ be a finite simple graph with vertices $V=V(G)=\{x_1, \ldots, x_n\}$ and the edges $E=E(G)$. The edge ideal $I(G)$ of $G$ is defined to be the ideal in $K[x_1,\ldots,x_n]$ generated by the square free quadratic monomials representing the edges of $G$, i.e., 
	\[ I(G) = \langle x_ix_j \mid x_ix_j \in E \rangle. \]
\end{Definition}

A vertex cover of a graph is a set of vertices such that every edge has at least one vertex belonging to that set. A minimal vertex cover is a vertex cover such that none of its subsets is a vertex cover. For any graph $G$ with the set of all minimal vertex covers $C$, the edge ideal $I(G)$ has the primary decomposition: 
\[ I(G)= \bigcap_{\{x_{i_1}, \ldots, x_{i_u}\}\in C} (x_{i_1}, \ldots, x_{i_u}). \]

For example, when $G$ is a five cycle, the primary decomposition of the edge ideal $I(G)$ is 
\[ I(G) = (x_1x_2, x_2x_3, x_3x_4, x_4x_5, x_5x_1) = (x_1,x_2,x_4) \cap (x_1,x_3,x_5) \cap (x_1,x_3,x_4) \cap (x_2,x_3,x_5) \cap (x_2,x_4,x_5). \]

\begin{Example}{\rm
	Let $\Delta$ be the simplicial complex
	\begin{center}
		\begin{tikzpicture}
			\draw (0,0) -- (1.5,0);
			\filldraw[fill=gray, draw=black] (1.5,0) -- (2.5,1) -- (3.5,0) -- cycle;
			\filldraw (0,0) circle (2pt) node[anchor=north]{$x_1$};		
			\filldraw (1.5,0) circle (2pt) node[anchor=north]{$x_2$};
			\filldraw (2.5,1) circle (2pt) node[anchor=west]{$x_3$};
			\filldraw (3.5,0) circle (2pt) node[anchor=north]{$x_4$};
		\end{tikzpicture}
	\end{center}
	Then $R = K[x_1,x_2,x_3,x_4]/((x_1) \cap (x_3,x_4))$ is the face ring of $\Delta.$ Observe that $R$ is not a Cohen-Macaulay ring. Let $I=(x_1) \cap (x_3, x_4)$. Here the defining ideal is the edge ideal of a line graph with three vertices. Set $S=K[x_1, x_2, x_3, x_4],$ $\pp_1 = (x_1),$ and $\pp_2 = (x_3,x_4).$ Therefore, for $q,k \geq 1,$
	\begin{align*}
		\ell\left(\frac{R}{\nn^{[q]k}}\right) 
		= \ell\left(\frac{S}{I+\mm^{[q]k}}\right)  
		=& \ \ell\left(\frac{S}{\pp_1+\mm^{[q]k}}\right) + \ell\left(\frac{S}{\pp_2 + \mm^{[q]k}}\right) - \ell\left(\frac{S}{\pp_1 + \pp_2 + \mm^{[q]k}}\right)  \\[1mm]
		=& \ \binom{k+2}{3} q^3 + \binom{k+1}{2} q^2 - kq.
	\end{align*}
	This implies that $e_0(\nn^{[q]}) = q^3,$ $e_1(\nn^{[q]}) = -q^2,$ $e_2(\nn^{[q]}) = -q,$ $e_3(\nn^{[q]})=0$ and $e_{HK}(\nn^k) = \binom{k+2}{3}.$
}\end{Example}

\begin{Example}[\bf Complete Bipartite Graphs]{\rm
	A complete bipartite graph $K_{\alpha,\beta}$ is a graph whose set of  vertices is decomposed into two disjoint sets such that no two  vertices within the same set are adjacent and that every pair of  vertices in the two sets are adjacent.
	\begin{align*}
		\begin{tikzpicture}
			\draw (0,0) ellipse (20pt and 40pt);
			\draw (4,0) ellipse (20pt and 40pt);	
			\filldraw (0.1,.8) circle (2pt) node[anchor=east]{$x_1$};
			\filldraw (0.1,.3) circle (2pt) node[anchor=east]{$x_2$};	
			\filldraw (0.1,-.1) circle (.8pt);	
			\filldraw (0.1,-.3) circle (.8pt);
			\filldraw (0.1,-.5) circle (.8pt);
			\filldraw (0.1,-.8) circle (2pt) node[anchor=east]{$x_\alpha$};	
			\filldraw (3.9,.8) circle (2pt) node[anchor=west]{$y_1$};
			\filldraw (3.9,.3) circle (2pt) node[anchor=west]{$y_2$};	
			\filldraw (3.9,-.1) circle (.8pt);	
			\filldraw (3.9,-.3) circle (.8pt);
			\filldraw (3.9,-.5) circle (.8pt);
			\filldraw (3.9,-.8) circle (2pt) node[anchor=west]{$y_\beta$};	
			\draw (.1,.8) -- (3.9,.8);
			\draw (.1,.8) -- (3.9,.3);
			\draw (.1,.8) -- (3.9,-.8);
			\draw (.1,.3) -- (3.9,.8);
			\draw (.1,.3) -- (3.9,.3);
			\draw (.1,.3) -- (3.9,-.8);
			\draw (.1,-.8) -- (3.9,.8);
			\draw (.1,-.8) -- (3.9,.3);
			\draw (.1,-.8) -- (3.9,-.8);
		\end{tikzpicture}
	\end{align*}
	Let $S=K[x_1, \ldots, x_\alpha, y_1, \ldots, y_\beta],$  where $1 \leq \alpha \leq \beta.$ Then the edge ideal $I = I(K_{\alpha,\beta}) = \big(x_i y_j \mid 1 \leq i \leq \alpha, 1\leq j \leq \beta \big)$ and $R=S/I$ is a $\beta$-dimensional ring. Let  $\pp_1 = (x_1, \ldots, x_\alpha)$, $\pp_2 = (y_1, \ldots, y_\beta).$ Thus $I=\pp_1 \cap \pp_2$. Note that $I$ is the Stanley-Reisner ideal of the union of an $\alpha$-simplex and a $\beta$-simplex. We compute the generalized Hilbert-Kunz function of $\nn^k$ for different cases of $\alpha$ and $\beta.$ Recall that for all $q,k \geq 1,$ 
	\[ \ell\left(\frac{R}{\nn^{[q]k}}\right) 
	= \ell\left(\frac{S}{I+\mm^{[q]k}}\right)  
	= \ \ell\left(\frac{S}{\pp_1+\mm^{[q]k}}\right) + \ell\left(\frac{S}{\pp_2 + \mm^{[q]k}}\right) - \ell\left(\frac{S}{\pp_1 + \pp_2 + \mm^{[q]k}}\right). \]
	{\bf Case 1}: Let $\beta = \alpha.$ Then for $q,k \geq 1,$
	\begin{align}  \label{BGeq}			
		\ell\left(\frac{R}{\nn^{[q]k}}\right) 
		= 2\binom{k+\beta-1}{\beta} q^\beta - 1.
	\end{align}
	This implies that $e_0(\nn^{[q]}) = 2q^\beta,$ $e_{\beta}(\nn^{[q]}) = (-1)^{\beta+1}$ and $e_{HK}(\nn^k) = 2\binom{k+\beta-1}{\beta}.$		
	
	{\bf Case 2}: Let $\beta \geq \alpha+1.$ Then for $q,k \geq 1,$
	\begin{align*}			
		\ell\left(\frac{R}{\nn^{[q]k}}\right) 
		= \binom{k+\beta-1}{\beta} q^\beta + \binom{k+\alpha-1}{\alpha} q^\alpha - 1.
	\end{align*}
	This implies that $e_0(\nn^{[q]}) = q^\beta,$ $e_{\beta-\alpha}(\nn^{[q]}) = (-1)^{\beta-\alpha} q^\alpha$, $e_{\beta}(\nn^{[q]}) = (-1)^{\beta+1}$ and $e_{HK}(\nn^k) = \binom{k+\beta-1}{\beta}.$
}\end{Example}

\section{Counter-example to Smirnov's conjecture}

Smirnov proposed that following conjecture in \cite{ilya}.
\begin{Conjecture}  \label{conjecture}
	Let $(R,\mm)$ be a Cohen-Macaulay local ring. Then an $\mm$-primary ideal $I$ is stable if and only if $\lim\limits_{q \rightarrow \infty}	e_1(I^{[q]})/q^d = e_0(I) - e_{HK}(I).$
\end{Conjecture}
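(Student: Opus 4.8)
The conjecture as stated is \emph{false}; the plan is to disprove it by producing an $\mm$-primary ideal that satisfies the displayed numerical identity while failing to be stable. I would work with a face ring: let $K$ have prime characteristic $p$ and let $R=K[x_1,x_2,x_3,x_4]/I_\Delta$, where $\Delta$ is the simplicial complex on $\{x_1,x_2,x_3,x_4\}$ with edges $\{x_1,x_2\},\{x_2,x_3\},\{x_3,x_4\},\{x_2,x_4\}$ and no higher faces --- that is, a $3$-cycle on $\{x_2,x_3,x_4\}$ with a pendant edge to $x_1$ --- so that $I_\Delta=(x_1x_3,x_1x_4,x_2x_3x_4)=(x_3,x_4)\cap(x_1,x_3)\cap(x_1,x_4)\cap(x_1,x_2)$. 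First I would observe that $\Delta$ is a connected one-dimensional complex, hence $R$ is a $2$-dimensional Cohen--Macaulay standard graded ring, and that for the maximal homogeneous ideal $\nn$ of $R$ the characteristic-$p$ Frobenius power $\nn^{[q]}$ ($q=p^e$) coincides with the generalized Frobenius power $(x_1^q,x_2^q,x_3^q,x_4^q)R$ of Section 3, so that Theorem~\ref{strings} applies with $d=2$.

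Next I would check that $I=\nn$ satisfies the equation of the conjecture. Theorem~\ref{strings} gives $\lim_{q\to\infty}e_1(\nn^{[q]})/q^2=0$ and $e_{HK}(\nn^k)=e_0(\nn)\binom{k+1}{2}$; taking $k=1$ yields $e_{HK}(\nn)=e_0(\nn)$, so both sides of $\lim_{q\to\infty}e_1(\nn^{[q]})/q^2=e_0(\nn)-e_{HK}(\nn)$ are $0$. This already shows that one implication of the conjecture holds for \emph{every} ideal of a face ring generated by pure powers of the variables, so the conjecture can only fail through stability.

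Then I would show $\nn$ is not stable. The $f$-vector of $\Delta$ is $(1,4,4)$, so $H_R(t)=1+\frac{4t}{1-t}+\frac{4t^2}{(1-t)^2}=\frac{(1+t)^2}{(1-t)^2}$, giving $\dim_K R_m=4m$ for $m\ge 1$ and hence $\ell(R/\nn^n)=1+\sum_{m=1}^{n-1}4m=2n^2-2n+1$ for $n\ge 1$. Matching this against $P_\nn(n)=e_0(\nn)\binom{n+1}{2}-e_1(\nn)n+e_2(\nn)$ gives $e_0(\nn)=4$, $e_1(\nn)=4$, $e_2(\nn)=1$. Since $\ell(R/\nn)=1$ we get $e_1(\nn)=4\ne 3=e_0(\nn)-\ell(R/\nn)$, so by the Huneke--Ooishi criterion (Theorem~\ref{H-O}) the ideal $\nn$ is \emph{not} stable. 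Together with the previous paragraph this makes $\nn$ a counter-example to the conjecture.

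I do not anticipate a genuine obstacle. The two points that need care are: (a) confirming that for the variable generators the generalized Frobenius power of Section 3 coincides with the characteristic-$p$ Frobenius power, so that Theorem~\ref{strings} legitimately bears on the conjecture; and (b) computing the Hilbert--Samuel polynomial of $\nn$, hence $e_1(\nn)$, correctly, since everything hinges on the strict inequality $e_1(\nn)\ne e_0(\nn)-\ell(R/\nn)$. The same computation carried out on the face ring of any $N$-cycle, $N\ge 3$, gives $e_1(\nn)=e_0(\nn)=N\ne N-1$, and hence an infinite family of such counter-examples.
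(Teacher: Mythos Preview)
Your proposal is correct and uses exactly the same counter-example as the paper: the face ring of the $3$-cycle on $\{x_2,x_3,x_4\}$ with a pendant edge to $x_1$, together with Theorem~\ref{strings} to force $\lim_{q\to\infty}e_1(\nn^{[q]})/q^2=0=e_0(\nn)-e_{HK}(\nn)$. The only difference is how non-stability is verified: the paper reads off $r(\nn)=2$ directly from the degree of the numerator $(1+t)^2$ in the Hilbert series of the Cohen--Macaulay ring $R$, whereas you compute $e_1(\nn)=4\ne 3=e_0(\nn)-\ell(R/\nn)$ and invoke the Huneke--Ooishi criterion (Theorem~\ref{H-O}); both arguments amount to the same Hilbert-series computation, and your additional remark about $N$-cycles matches the paper's Example~\ref{cycleSC}.
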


From section 3, we know that if $R$ is a $d$-dimensional face ring of a simplicial complex and $J = (x_1^{v_1},\ldots,x_r^{v_r})$ is an $R$-ideal such that $v_i >0$ for all $i$, then $\lim\limits_{q \to \infty} e_1(J^{[q]})/q^d=0$ and $e_{HK}(J) = e_0(J).$ This means that $\lim\limits_{q \rightarrow \infty}	e_1(J^{[q]})/q^d = e_0(J) - e_{HK}(J)$ is always true for such ideals. In particular, the conjecture is false whenever $J$ is not a stable ideal. The following example illustrates one such case.


\begin{Example}{\rm 
	Let $\Delta$ be the simplicial complex 
	\begin{center}
		\begin{tikzpicture}
			\draw (0,0) -- (1.5,0);
			\draw (1.5,0) -- (2.5,1) -- (3.5,0) -- cycle;
			\filldraw (0,0) circle (2pt) node[anchor=north]{$x_1$};		
			\filldraw (1.5,0) circle (2pt) node[anchor=north]{$x_2$};
			\filldraw (2.5,1) circle (2pt) node[anchor=west]{$x_3$};
			\filldraw (3.5,0) circle (2pt) node[anchor=north]{$x_4$};
		\end{tikzpicture}
	\end{center}
	Then $R = K[x_1,x_2,x_3,x_4]/((x_3,x_4) \cap (x_1,x_3) \cap (x_1,x_4) \cap (x_1,x_2))$ is the face ring of $\Delta.$ Observe that $R$ is a $2$-dimensional Cohen-Macaulay ring with $f$-vector $f(\Delta) = (1,4,4).$ Let $\nn = (x_1,x_2,x_3,x_4)$ denote the maximal ideal of $R.$ Using Theorem \ref{strings}, it follows that $4 = e_0(\nn) = e_{HK}(\nn)$ and $\lim\limits_{q \rightarrow \infty} e_1(\nn^{[q]})/q^2=0.$ We prove that $\nn$ is not stable. Consider the Hilbert series of $R$,
	\[ H(R,z) = 1 + \frac{4z}{1-z} + \frac{4z^2}{(1-z)^2} = \frac{1+2z+z^2}{(1-z)^2}. \]
	As $R$ is Cohen-Macaulay and degree of the numerator above is 2, we get $r(\nn^{[q]})=2.$
	
	We also observe that in this case, $\depth G(\nn^{[q]}) \geq 1.$ Set $\pp_1 = (x_3, x_4),$ $\pp_2 = (x_1, x_3),$ $\pp_3 = (x_1, x_4)$ and $\pp_4 = (x_1, x_2).$ Then $I = \cap_{i=1}^{4} \ \pp_i$ and $R = S/I,$ where $S = K[x_1,\ldots,x_4].$ From Theorem \ref{polydelta} it follows that for all $q,k \geq 1,$
	\begin{align} \label{counter}
		\ell\left(\frac{R}{\nn^{[q]k}}\right) 
		=& \ \sum_{i=1}^{4} \ell\left(\frac{S}{\pp_i+\mm^{[q]k}}\right) - \sum_{1 \leq i < j \leq 4} \ell\left(\frac{S}{\pp_i + \pp_j + \mm^{[q]k}}\right) + \sum_{1 \leq i < j < u \leq 4} \ell\left(\frac{S}{\pp_i + \pp_j + \pp_u + \mm^{[q]k}}\right) \nonumber\\
		&\hspace{8.3cm}- \ell\left(\frac{S}{\sum_{i=1}^{4}\pp_i+\mm^{[q]k}}\right)  \nonumber\\ 
		=& \ 4 \binom{k+1}{2} q^2 - (5kq + 1) + (kq + 3) - 1   \nonumber\\
		=& \ 4 \binom{k+1}{2} q^2 - 4kq +1.
	\end{align}
	This implies that 
	\[ e_0(\nn^{[q]}) = 4q^2, \ e_1(\nn^{[q]}) = 4q, \ e_2(\nn^{[q]}) = 1 \text{ and } e_{HK}(\nn^k) = 4 \binom{k+1}{2}.\] 
	From equation \eqref{counter}, we get that for all $k \geq 1,$
	\[ \ell\left(\frac{(\nn^{[q]})^k}{{(\nn^{[q]})^{k+1}}}\right)
	= 4q^2{{k+2}\choose 2}-4q(k+1)+1 -4q^2{{k+1}\choose 2}+4qk-1 =4q^2(k+1)-4q. \]
	
	Hence 
	\[ \ell(\nn^{[q]}/(\nn^{[q]})^2)+ (1-d)\ell(R/\nn^{[q]}) + 1 = 8q^2-4q-(4q^2-4q+1)+1 = 4q^2 = e_0(\nn^{[q]}).\] 
	Using Theorem \ref{rossi-valla}, we get depth $G(\nn^{[q]}) \geq 1$ for all $q.$ 
%
}\end{Example}

\begin{Remark}
	In the above example, $G(\nn) \simeq R$ is Cohen-Macaulay and $\depth G(\nn^{[q]}) \geq d-1$ for all $q.$ This indicates that the conjecture is false even if $\depth G(\nn^{[q]}) \geq d-1.$
\end{Remark}

The above observation indicates that Conjecture \ref{conjecture} does not help in characterizing the stability of ideals in rings, as in the setup of section 3. We end the section by characterizing the stability of ideals in certain classes of Cohen-Macaulay simplicial complexes and graphs. We use Theorem \ref{H-O} for the same.

\begin{Examples}{\rm
(1) Let $R$ be the face ring of the simplicial complex on $r$ vertices, $r \geq 3,$ as in Example \ref{pathSC}. 
\begin{center}
	\begin{tikzpicture}
		\draw (0,0) -- (2.5,0);
		\filldraw (0,0) circle (2pt) node[anchor=north]{$x_1$};
		\filldraw (1,0) circle (2pt) node[anchor=north]{$x_2$};
		\filldraw (2,0) circle (2pt) node[anchor=north]{$x_3$};
		\draw [dashed] (3,0) -- (4,0);
		\draw (4.5,0) -- (6,0);
		\filldraw (5,0) circle (2pt) node[anchor=north]{$x_{r-1}$};
		\filldraw (6,0) circle (2pt) node[anchor=north]{$x_r$};
	\end{tikzpicture}
\end{center}
By putting $q=1$ in \eqref{pathSCeq}, we get for all $k \geq 1,$
\[ \ell(R/\nn^k) = (r-1) \binom{k+1}{2} - (r-2)k. \]
Hence, 
\[ e_0(\nn) - \ell(R/\nn) = (r-1) - \big( r-1 - (r-2) \big) = r-2 = e_1(\nn), \]
implying that $\nn$ is a stable ideal. Let $J = (x_1^{v_1},\ldots,x_r^{v_r})$ be an ideal of $R$ such that $v_i >0$ for all $i.$ Since $\ell(R/(J^{[q]})^k) = \ell(R/(\nn^{[q]})^k) \prod_{i =1}^r v_i,$ it follows that $J$ is a stable ideal.

(2) Let $R$ be the face ring of the simplicial complex related to an $n$-cycle, as in Example \ref{cycleSC}. By putting $q=1$ in \eqref{cycle}, we get for all $k \geq 1,$
\[ \ell(R/\nn^k) = n \binom{k+1}{2} - nk + 1. \]
Hence, 
\[ e_0(\nn) - \ell(R/\nn) = n - 1 \neq e_1(\nn), \]
implying that $\nn$ is not a stable ideal. Similarly, $J$ is not a stable ideal, where $J = (x_1^{v_1},\ldots,x_r^{v_r})$ such that $v_i >0$ for all $i.$ 
}\end{Examples}

\begin{Theorem}
	Consider a complete graph, $K_r,$ on $r$ vertices, where $r \geq 1.$ Let $S = K[x_1,\ldots,x_r]$ and $I = I(K_r)$ denote the edge ideal of $K_r.$ Let $R=S/I$ and $J = (x_1^{v_1},\ldots,x_r^{v_r})$ be an ideal of $R$ such that $v_i >0$ for all $i.$ Then $J$ is a stable ideal.
\end{Theorem}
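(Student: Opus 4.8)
The plan is to make the ring $R$ explicit and then invoke the Huneke--Ooishi criterion, Theorem~\ref{H-O}, exactly as in the preceding examples. A set of vertices of $K_r$ is a vertex cover precisely when its complement is an independent set, hence has at most one vertex; so the minimal vertex covers of $K_r$ are the sets $\{x_j \mid j \neq i\}$, $i = 1,\dots,r$, and
\[ I(K_r) = (x_ix_j \mid 1\le i<j\le r) = \bigcap_{i=1}^{r}\pp_i, \qquad \pp_i = (x_j \mid j\neq i). \]
Thus $\dim R = 1$, and $R$ is the Stanley--Reisner ring of the independence complex of $K_r$, which consists of $r$ disjoint points; hence $R$ is Cohen--Macaulay (equivalently, $R$ is reduced and equidimensional of dimension one, so $\depth R \geq 1 = \dim R$). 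The standard monomials of $R$ are $1$ and the pure powers $x_i^n$, $1\le i\le r$, $n\ge 1$, since any monomial involving two distinct variables lies in $I(K_r)$; in particular $x_i^{v_i}x_j^{v_j} = 0$ in $R$ for $i\neq j$.

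Since all mixed products of the generators of $J$ vanish, $J^k = (x_1^{kv_1},\dots,x_r^{kv_r})R$ for every $k\ge 1$, and the standard monomials outside $J^k$ are precisely $1$ together with the powers $x_i^n$ with $1\le n\le kv_i-1$. Hence
\[ \ell\left(\frac{R}{J^k}\right) = 1 + \sum_{i=1}^{r}(kv_i-1) = \Big(\sum_{i=1}^{r}v_i\Big)k - (r-1) \qquad \text{for all } k\ge 1, \]
so comparison with the dimension-one Hilbert--Samuel polynomial $P_J(k) = e_0(J)k - e_1(J)$ gives $e_0(J) = \sum_{i=1}^r v_i$ and $e_1(J) = r-1$, while $\ell(R/J) = \sum_{i=1}^r v_i - (r-1)$. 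Therefore $e_1(J) = r-1 = e_0(J) - \ell(R/J)$, and Theorem~\ref{H-O} shows that $J$ is stable. (If $r=1$ then $R = K[x_1]$ and $J = (x_1^{v_1})$ is principal, so the statement is trivial; the formulas above remain valid, with $e_1(J) = 0 = e_0(J) - \ell(R/J)$.)

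No step here is genuinely difficult; the only points requiring care are the primary decomposition of the edge ideal of $K_r$ and the Cohen--Macaulayness of $R$ (needed before Theorem~\ref{H-O} applies), both of which are immediate from the combinatorics of the complete graph. Alternatively, one can bypass Theorem~\ref{H-O} by exhibiting a minimal reduction directly: set $a = x_1^{v_1}+\cdots+x_r^{v_r}\in J$. Because $x_i^{v_i}x_j^{v_j}=0$ in $R$ for $i\neq j$, we get $a\,x_j^{v_j}=x_j^{2v_j}$, whence $aJ = (x_1^{2v_1},\dots,x_r^{2v_r})R = J^2$; and since $a\not\equiv 0\pmod{\pp_i}$ for every $i$, the element $a$ lies in no minimal prime of $R$ and is therefore a nonzerodivisor. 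Consequently $(a)$ is a minimal reduction of $J$ with $J^{n+1}=aJ^n$ for all $n\ge 1$, i.e.\ $r(J)\le 1$, and $r(J)=1$ when $r\ge 2$ since then $J$ is minimally generated by $r\ge 2$ elements.
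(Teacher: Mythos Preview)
Your argument is correct and follows the same overall plan as the paper: compute $\ell(R/J^k)$ explicitly for all $k$, read off $e_0(J)$, $e_1(J)$, and $\ell(R/J)$, and apply the Huneke--Ooishi criterion (Theorem~\ref{H-O}). The difference lies in how the length is computed. The paper first computes $\ell(R/(\nn^{[q]})^k)=rkq-(r-1)$ via the inclusion--exclusion of Theorem~\ref{polydelta} and then appeals to the identity $\ell(R/(J^{[q]})^k)=\ell(R/(\nn^{[q]})^k)\prod_i v_i$ stated after Theorem~\ref{strings}, obtaining $e_0(J)=r\prod_i v_i$ and $e_1(J)=(r-1)\prod_i v_i$. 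You instead count standard monomials directly, using that $x_i^{v_i}x_j^{v_j}=0$ for $i\neq j$ so that $J^k=(x_1^{kv_1},\dots,x_r^{kv_r})$, and obtain $e_0(J)=\sum_i v_i$ and $e_1(J)=r-1$. Your numbers are the correct ones: in the inclusion--exclusion each term $\ell(S/(\pp+(J^{[q]})^k))$ scales by the product of only those $v_j$ with $x_j\notin\pp$, not by the full $\prod_i v_i$, so the paper's scaling identity (and hence its values of $e_0(J)$, $e_1(J)$, $\ell(R/J)$) fails once the $v_i$ are not all equal---e.g.\ for $r=2$, $v_1=2$, $v_2=3$ one has $\ell(R/J)=4$, not $\prod v_i=6$. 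The equality $e_1(J)=e_0(J)-\ell(R/J)$ and hence the stability conclusion is unaffected, but your direct count both avoids this slip and is more elementary. Your alternative ending, exhibiting $a=\sum_i x_i^{v_i}$ as a regular element with $aJ=J^2$, gives an even shorter self-contained proof that bypasses Theorem~\ref{H-O} entirely.
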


\begin{proof}
	We begin by observing that any minimal vertex cover of $K_r$ contains exactly $r-1$ vertices. Hence, $I = \cap_{i=1}^r (x_1,\ldots,\hat{x_i},\ldots,x_r).$ Set $\pp_i = (x_1,\ldots,\hat{x_i},\ldots,x_r),$ for all $i=1,\ldots,r.$ Let $\nn$ denote the maximal homogeneous ideal of $R.$ Note that $\pp_i+\pp_j = \nn$ for all $i \neq j.$ Since $\ell(R/(J^{[q]})^k) = \ell(R/(\nn^{[q]})^k) \prod_{i =1}^r v_i,$ we compute $\ell(R/(\nn^{[q]})^k).$ From Theorem \ref{polydelta}, it follows that for all $q,k \geq 1,$
	\begin{align*}
		\ell\left(\frac{R}{\nn^{[q]k}}\right) 
		=& \ \sum_{i=1}^{r} \ell\left(\frac{S}{\pp_i+\mm^{[q]k}}\right) - \sum_{1 \leq i < j \leq r} \ell\left(\frac{S}{\pp_i + \pp_j + \mm^{[q]k}}\right) + \cdots + (-1)^{r-1} \ \ell\left(\frac{S}{\sum_{i=1}^{r}\pp_i+\mm^{[q]k}}\right) \\
		=& \ rkq - \binom{r}{2} + \binom{r}{3} + \cdots + (-1)^{r-1} \\
		=& \ rkq - (r-1).
	\end{align*}
	This implies that $\ell(R/(J^{[q]})^k) = rkq \prod_{i =1}^r v_i - (r-1)\prod_{i =1}^r v_i.$ In particular, $\ell(R/J^k) = rk\prod_{i =1}^r v_i - (r-1)\prod_{i =1}^r v_i.$ Hence, 
	\[ e_0(J) - e_1(J) = r \prod_{i =1}^r v_i - (r-1)\prod_{i =1}^r v_i = \ell(R/J), \]
	implying that $J$ is a stable ideal.
\end{proof}

%
%

\bibliographystyle{plain}
\bibliography{ref}

\end{document}